\documentclass[11pt]{article}
\usepackage{graphicx}

\usepackage{amsfonts, amssymb, url, amsthm, bbm, amsmath, hyperref, tikz-cd, comment, stmaryrd, mathrsfs}

\usepackage{xcolor}

\newcommand{\bZ}{\mathbb{Z}}
\newcommand{\bQ}{\mathbb{Q}}
\newcommand{\Zp}{\bZ_p}
\newcommand{\Qp}{\bQ_p}

\newcommand{\cL}{\mathcal{L}}
\newcommand{\IZind}{\mathrm{ind}_{IZ}^G\>}

\newcommand{\Fq}{\mathbb{F}_q}

\newcommand{\id}{\mathrm{id}}
\newcommand{\GL}{\mathrm{GL}}

\newcommand{\SymE}[1]{\underline{\mathrm{Sym}}^{#1}E^2}

\newcommand{\SymF}[1]{\mathrm{Sym}^{#1}\Fq^2}

\newcommand{\logL}{\log_{\cL}}

\newcommand{\latticeL}[1]{\tilde{\Theta}(#1, \cL)}
\newcommand{\ind}{\mathrm{ind}\>}

\newcommand{\br}[1]{\overline{#1}}
\newcommand{\tU}{\tilde{U}_p}
\newcommand{\tW}{\tilde{W}_p}

\theoremstyle{plain}
\newtheorem{theorem}{Theorem}[section]
\newtheorem{lemma}[theorem]{Lemma}

\newtheorem{Proposition}[theorem]{Proposition}
\newtheorem{remark}[theorem]{Remark}

\title{Reduction mod $p$ of semi-stable representations of some super-Breuil weights}

\author{Anand Chitrao and Eknath Ghate}

\begin{document}

\maketitle

\begin{abstract}
    We determine the mod $p$ reductions of the semi-stable representations $V_{k, \cL}$ of weight $k \in [p + 5, 2p]\cup[2p + 6, 3p + 1]$ and $v_p(\cL) < 1-k/2$ for primes $p \geq 5$. 
    In particular, this shows that the techniques introduced in \cite{CG23} involving the $p$-adic and mod $p$ local Langlands correspondences can be used to compute the reduction of $V_{k, \cL}$ outside the range $k \in [3, p + 1]$.
    Moreover, this shows that the bound on $v_p(\cL)$ given by Bergdall-Levin-Liu \cite{BLL23} can be improved, at least for weights $k \in [2p + 6, 3p + 1]$.
\end{abstract}

\section{Introduction}
    Fix a prime $p \geq 5$. Let $E$ be a finite extension of $\Qp$ containing $\sqrt{p}$. For $k \geq 3$, let $V_{k, \cL}$ be the two-dimensional irreducible semi-stable representation with Hodge-Tate weights $(0, k - 1)$ and $\cL$-invariant $\cL \in E$. In our earlier paper \cite{CG23}, we computed the reduction mod $p$ of $V_{k, \cL}$ for $k \in [3, p + 1]$ and for arbitrary $\cL$, extending the work of \cite{BM02} and \cite{GP18} for $k \in [3, p - 1]$. To achieve this, we computed the reduction mod $p$ of the Banach space associated to $V_{k, \cL}$ by the $p$-adic local Langlands correspondence (see \cite{Bre04, Bre10, Col10}).
    As was noted in \cite{CG23}, our method works, in theory, for arbitrary $k \geq 3$. It was put into practice by computing the reductions completely for the weights $k = p$ and $p + 1$. 

    In \cite{BLL23}, Bergdall-Levin-Liu used the theory of Breuil and Kisin modules to compute the reductions of $V_{k, \cL}$ for arbitrary $k \geq 4$ but for $\cL$ satisfying $v_p(\cL) < 2 - k/2 - v_p((k - 2)!)$ (even for $p = 3$). In Section $1.3$ of their paper, the authors mention that a computation using R. Pollack's code shows that the conclusion of their result holds for $p = 3$, $k = 6$, and $v_p(\cL) = -2$. Note that $2 - k/2 - v_p((k - 2)!) = -2$, hinting at the fact that their bound could possibly be improved. Indeed, in \cite[Remark 1.2]{BLL23}, the authors wonder to what extent their bound is optimal. In \cite{CG23}, we showed that their bound is indeed optimal for $k \in [3, p + 1]$. For $k \in [3, p - 1]$, this was already known by \cite{BM02} and \cite{GP18}.

    The goal of this paper is twofold. On the one hand, we wish to recover some of the results in \cite{BLL23} using our methods. On the other hand, we wish to see whether the bound given in \cite{BLL23} can be improved.
    
    In order to do this, we compute the reduction of $V_{k, \cL}$ for $k$ belonging to $[p + 5, 2p] \cup [2p + 6, 3p + 1]$ and $\cL$ satisfying $v_p(\cL) < -r/2$, where $r = k - 2$. When $k \in [p + 5, 2p]$, we recover the result in \cite{BLL23}.
    Since for $k \in [2p + 6, 3p + 1]$, the expression $2 - k/2 - v_p((k - 2)!)$ equals $-1 - r/2$, which is strictly smaller than $-r/2$, our result improves the bound in \cite{BLL23} for $k \in [2p + 6, 3p + 1]$. 

    We use the notation in \cite{CG23}.
    We write most of this paper assuming that $p \leq r \leq p^2 - p - 1$ and that $v_p(\cL)$ is slightly more general than $v_p(\cL) < -r/2$. 
    We only specialize to $r \in [p + 3, 2p - 2] \cup [2p + 4, 3p - 1]$ and $v_p(\cL) < -r/2$ in the last section, where we prove the following result.
    \begin{theorem}\label{Intro theorem}
        Let $p \geq 5$ and $p + 3 \leq r \leq 2p - 2$ or $2p + 4 \leq r \leq 3p - 1$. Then, for any $\cL$ with $v_p(\cL) < -r/2$, we have $\overline{V}_{k, \cL} \simeq \ind \omega_2^{r + 1}$.
    \end{theorem}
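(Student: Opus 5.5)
We follow the method of \cite{CG23}: by the compatibility of the $p$-adic and mod $p$ local Langlands correspondences, it suffices to show that the reduction $\br{\Theta}$ of the lattice $\latticeL{k}$ in the Banach space $B(V_{k,\cL})$ is the supersingular representation $\pi(r, 0, 1)$ of $\GL_2(\Qp)$. Under mod $p$ local Langlands this corresponds to $\ind \omega_2^{r+1}$, up to the normalization used in \cite{CG23}.

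Recall that $\br{\Theta}$ is a quotient of $\IZind \SymF{r}$. The plan is to show that $\br{\Theta}$ is in fact a quotient of $\IZind V$, where $V$ is the single Jordan--H\"older factor $\SymF{r'}\otimes \det^{m}$ of $\SymF{r}$ giving $\pi(r,0,1)$. For $r\in[p+3,2p-2]$ this factor is $\SymF{r-p+1}\otimes\det$. For $r\in[2p+4,3p-1]$ the relevant factor sits in the middle of the filtration of $\SymF{r}$, and we identify it explicitly.

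To carry this out, we construct explicit elements of $\latticeL{k}$ whose images in $\br{\Theta}$ lie in the image of $\IZind$ of each unwanted Jordan--H\"older factor, and show that these images vanish. We also show that the image of the remaining factor is a quotient of $\IZind V/(T)$ and is nonzero.

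The elements are built as combinations $\sum_\lambda g_\lambda\,(\logL(X-\lambda Y)\, \text{poly}(X,Y))$ of the functions used in \cite{CG23}. Here $\logL$ enters through $\cL$, and the coefficients are scaled by powers of $p$ and of $\cL$. The hypothesis $v_p(\cL)<-r/2$ ensures that the terms involving $\cL$ dominate, so these elements are integral after rescaling. Their reductions are then computed with the $T$-operator (Hecke) formulas: $\br{\Theta}$ is a quotient of $\IZind V/(T)$, which is irreducible supersingular, and the nonvanishing step forces $\br{\Theta}\cong\pi(r,0,1)$.

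The main obstacle is the second range, $r\in[2p+4,3p-1]$. There $\SymF{r}$ has three or four Jordan--H\"older factors, since $r$ exceeds $2p$. Eliminating the factors on both sides of $V$ requires integrality estimates for sums $\sum_{\lambda\in\Fq}$-type expressions $\sum_{i}\binom{r}{i}a^{r-i}$, involving $p$-adic valuations of binomial coefficients with two carries. The improvement over \cite{BLL23}, namely the bound $-r/2$ instead of $-1-r/2$, has to come out precisely from these congruences. Our first approach will be to use Lucas-type congruences for $\binom{r}{i}$ modulo $p^2$, together with the lemmas on $\sum_{\lambda}\lambda^{j}$ from \cite{CG23}. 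The constraints $r\ge p+3$ and $r\ge 2p+4$ are expected to be exactly what makes the needed boundary binomials vanish modulo $p$.
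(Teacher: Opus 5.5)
There is a genuine gap, beginning with the target. The factor $\SymF{r-p+1}\otimes\det$ you single out is not a Jordan--H\"older constituent of $\SymF{r}$. Scalars $z\in\mathbb{F}_p^\times$ act on it by $z^{r-p+3}=z^{r+2}$, but on $\SymF{r}$ by $z^{r}$, and these differ for $p\geq 5$. So the plan ``kill every constituent except $V$'' cannot be run with this $V$. In the paper the surviving piece is $F_{2,3}$, a quotient of $\IZind ad^{r-1}\simeq\IZind d^{r-2}\otimes\det$, which corresponds to the constituent $\theta\cdot\SymF{r-p-1}\cong\SymF{r-p-1}\otimes\det$. For $2p+4\leq r\leq 3p-1$ it is $F_{4,5}$, corresponding to $\theta^{2}\cdot\SymF{r-2p-2}\cong\SymF{r-2p-2}\otimes\det^{2}$, which is the bottom of the $\theta$-filtration, not a middle factor. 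Moreover, what survives is a quotient of $\IZind$ of a character, not of some $\mathrm{ind}_{KZ}^{G}V/(T)$, so supersingularity is not automatic. It comes from the Iwahori mod $p$ LLC \cite[Theorem 2.2]{Chi25} together with the condition $r-2\not\equiv 1,p-2 \bmod p-1$ (resp.\ $r-4\not\equiv 1,p-2$). That condition is exactly why $r=2p-1$ is excluded, and it does not appear in your outline.

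More seriously, your outline contains no mechanism that kills any specific piece. The contributing piece is no longer $F_{0,1}$, as it was in \cite{CG23}. So the shallow pieces $F_{0,1},\dots,F_{2(c-1),2(c-1)+1}$, with $c=\lfloor r/p\rfloor$, must die, and they do so for every $\cL$ without any $\logL$. On $\IZind\SymE{r}/(\tU-1,\tW+1)$ one has $\tU\tW=-1$. Evaluate $\tU\tW$ on $\llbracket\id,Y^{r-i(p+1)+1}X^{-1}(-\theta)^{i}\rrbracket$, using that $\GL_2(\Zp)$ acts on the Dickson polynomial $\theta=X^pY-XY^p$ mod $p$ through $\det$. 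This kills $F_{2(i-1),2(i-1)+1}$ whenever $r\geq i(p+1)-1$. Your premise that every unwanted factor is killed by log-elements whose $\cL$-terms dominate misses this. For the deeper pieces, Lucas mod $p^2$ for $\binom{r}{i}$ plus power sums $\sum_\lambda\lambda^j$ is not enough. One needs the functions $p^x\sum_{i\in I}\lambda_i(z-i)^n\logL(z-i)$, where:
\begin{itemize}
\item $I=\{0,\dots,n,(b+1)p\}$ and $b=\lfloor n/p\rfloor$;
\item $x+v_p(\cL)=r/2-n-v_p(n(n-1)\cdots(n-b))$;
\item the $\lambda_i\in\Zp$ solve a Vandermonde system, so that $\sum_{i\in I}\lambda_i i^j=0$ for $j\leq n$, each residue-class sum $\sum_{i\equiv a \bmod p}\lambda_i i^j$ vanishes mod $p^2$, and $p\mid\lambda_i$ for $p\nmid i$.
\end{itemize}
These give a master congruence whose coefficients of $z^j\mathbbm{1}_{p\Zp}$ are governed by Stirling numbers of the second kind. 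They are divisible by $p$ for $j\geq n-b$, and are a unit times $(b+1)!$ at $j=n-b-1$. One then needs a case analysis on $v_p(n(n-1)\cdots(n-b))\in\{0,1\}$. The piece with $n-b-1=cp-1$ requires combining the congruences for $n=cp+c$ and $n=cp+c+1\leq r$. The hypothesis $v_p(\cL)<-r/2$ enters precisely as $v_p(\cL)<r/2-n$ for all such $n\leq r$. Without these ingredients, your proposal does not establish the vanishing of even one unwanted sub-quotient.
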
  

    To prove Theorem~\ref{Intro theorem}, we recall that by \cite[(9)]{CG23}, there is a surjection
    \[
        \IZind \SymF{r} \twoheadrightarrow \br{\latticeL{k}}.
    \]
    The canonical $IZ$-equivariant filtration on $\SymF{r}$ induces a $G$-equivariant filtration on $\br{\latticeL{k}}$. We name the sub-quotients of this filtration on $\br{\latticeL{k}}$ in such a way that the sub-quotient $\IZind a^id^{r - i}$ of $\IZind \SymF{r}$ surjects onto the sub-quotient $F_{2i, 2i + 1}$ of $\br{\latticeL{k}}$ for $0 \leq i \leq r$.

    The proof of the theorem proceeds by eliminating all but one of the sub-quotients $F_{2i, 2i+1}$. The remaining sub-quotient is the one contributing towards $\br{\latticeL{k}}$. We then use the Iwahori theoretic version of the mod $p$ local Langlands correspondence given in \cite[Theorem 2.2]{Chi25} to see that this sub-quotient must be supersingular.

    We explain the strategy of the proof in detail now. First, using \cite[Lemma 6.1]{CG23}, we may eliminate $F_{2i, 2i+1}$ for $i > r/2$. Next, consider the following diagram of the $F_{2i, 2i + 1}$ for $0 \leq i \leq \lfloor r/2 \rfloor$
    {\small
    \[
    \begin{array}{ccccccc}
        p^{r/2 - \lceil r/2 \rceil}z^{\lceil r/2 \rceil} 
        & \ldots \qquad & p^{r/2 - (r - 2)}z^{r - 2} & p^{r/2 - (r - 1)}z^{r - 1} & p^{r/2 - r}z^r \vspace{.1cm} \\ \vspace{.2cm}
        F_{2\lfloor r/2 \rfloor, 2\lfloor r/2 \rfloor + 1} 
        & \ldots \qquad & F_{4, 5} & F_{2, 3} & F_{0, 1} \\
        z^{\lfloor r/2 \rfloor} 
        & \ldots \qquad & z^2 & z & z^0.
    \end{array}
    \]
    }
    \!\!\!The bottom row displays the usual generator of $F_{2i, 2i+1}$, which is the image of the function $\llbracket\id, X^{i}Y^{r - i}\rrbracket$ under the surjection above (although we must multiply by the common function $\mathbbm{1}_{p\Zp}$), and the top row displays $-\beta = -\left(\begin{smallmatrix}0 & 1 \\ p & 0\end{smallmatrix}\right)$ applied to the usual generator (again with $\mathbbm{1}_{p\Zp}$ everywhere).
    
    In our earlier paper \cite{CG23}, the sub-quotient which contributed in the $\nu < 1 - r/2$ case was $F_{0, 1}$. This time around, however, $F_{0, 1}$ will be eliminated. This requires a new idea, which is described in Section~\ref{Killing shallow sub-quotients}. It was a surprise to us that $F_{0, 1}$ can be eliminated without using the poly$\cdot$log functions (so called in \cite{Gha26}) so prevalent in \cite{CG23}! It turns out that for $i \geq 0$, the sub-quotient $F_{2i, 2i+1}$ can be eliminated if $r \geq (i + 1)(p + 1) - 1$.

    The shallowest sub-quotient that is not eliminated by this easy method is the contributing one. Therefore, we skip that sub-quotient and move on to eliminating the remaining ones, which we henceforth refer to as the ``deeper'' sub-quotients. In Section~\ref{Killing deep sub-quotients}, we prove a master congruence in the form of Proposition~\ref{Final congruence}, which will be used multiple times to eliminate all the deeper sub-quotients. One can view this congruence as an extension of the $\nu < 1 + r/2 - n$ case of \cite[Proposition $9.6$]{CG23} to higher values of $r$. Stirling numbers of the second kind also make an appearance.

    In Section~\ref{Improving the BLL bound}, we eliminate the deeper sub-quotients by proving three mutually exclusive and exhaustive theorems depending on the power of $z$ in the top row in the diagram above.
    Theorems~\ref{The good method} and \ref{The bad method} use the congruence in Proposition~\ref{Final congruence} for a single value of $n$ satisfying $v_p([n]_{b + 1}) = 0$ and $1$, respectively, where $b = \lfloor n/p \rfloor$. We refer to the arguments given in these theorems as the good method and the bad method, respectively. Theorem~\ref{The ugly method} requires invoking Proposition~\ref{Final congruence} twice, once for $n$ satisfying $v_p([n]_{b + 1}) = 1$ and the other for $n$ satisfying $v_p([n]_{b + 1}) = 0$. We shall refer to this as the ugly method\footnote{With apologies to the 1966 cult classic.}.

    Our main theorem then follows from a simple application of the Iwahori mod $p$ LLC.

    \section{Preliminaries}
        First, we state some combinatorial lemmas.
        We begin with the mod $p^2$ version of Lucas' theorem, proved by Rowland.
        \begin{lemma}[{\cite[Theorem 4]{Row22}}]\label{Lucas mod p^2}
            If $0 \leq s \leq r \leq p - 1$, $a \geq 0$, and $b \geq 0$, then
            \[
                {pa + r \choose pb + s} \equiv {a \choose b}{r \choose s}\left(1 + pa(H_{r} - H_{r - s}) + pb(H_{r - s} - H_{s})\right) \mod p^2.
            \]
        \end{lemma}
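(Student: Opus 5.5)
We prove the congruence by comparing $p$-adic valuations of the two binomial coefficients as products. Write
\[
{pa + r \choose pb + s} = \frac{(pa+r)!}{(pb+s)!\,(p(a-b) + (r-s))!}.
\]
Here $0 \le r-s \le p-1$, so there is no carry. The idea is to split each factorial $(pm+t)!$ with $0\le t\le p-1$ as
\[
(pm+t)! = p^m\, m! \cdot \prod_{j=0}^{m-1}\prod_{i=1}^{p-1}(pj+i)\cdot \prod_{i=1}^{t}(pm+i).
\]
The powers $p^m$ cancel exactly because $a = b + (a-b)$. The $m!$ factors produce ${a\choose b}$. It therefore remains to compute the unit parts modulo $p^2$.

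For the full blocks, expand
\[
\prod_{i=1}^{p-1}(pj+i) \equiv (p-1)!\,\bigl(1 + pjH_{p-1}\bigr) \pmod{p^2}.
\]
Since $p\ge 5$, Wolstenholme's theorem gives $H_{p-1}\equiv 0 \pmod{p}$ (even mod $p^2$), so $pjH_{p-1}\equiv 0 \pmod{p^2}$. Each full block thus contributes $(p-1)!$ modulo $p^2$. The number of full blocks is $a$ in the numerator and $b+(a-b)=a$ in the denominator, so these contributions cancel.

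For the tails, expand
\[
\prod_{i=1}^{t}(pm+i)\equiv t!\,(1+pmH_t) \pmod{p^2}.
\]
The tails therefore contribute
\[
\frac{r!}{s!(r-s)!}\cdot\frac{1+paH_r}{(1+pbH_s)(1+p(a-b)H_{r-s})}\equiv {r\choose s}\bigl(1+paH_r-pbH_s-p(a-b)H_{r-s}\bigr) \pmod{p^2}.
\]
Rearranging the linear term gives $pa(H_r-H_{r-s})+pb(H_{r-s}-H_s)$, which is the claimed formula.

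One subtlety remains. The manipulation divides by units only after the powers of $p$ and the $m!$'s have been removed, so we should state the argument as an identity
\[
{pa+r\choose pb+s} = {a\choose b}\cdot U,
\]
where $U$ is a ratio of $p$-adic units, and then reduce $U$ modulo $p^2$. This gives the congruence even when ${a\choose b}$ is divisible by $p$.

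The only genuinely nontrivial input is Wolstenholme's congruence $H_{p-1}\equiv 0 \pmod p$ for $p\ge5$. Without it, the full-block factors would contribute an extra term $p(a - b - (a-b))H_{p-1}=0$ anyway, since the block counts match. So even that step cancels, and the main care needed is in bookkeeping the block counts.
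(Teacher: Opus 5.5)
The paper gives no proof of this lemma. It is quoted from Rowland \cite[Theorem 4]{Row22}, so there is no internal argument to compare against, and your factorial-splitting argument supplies a correct, self-contained proof for the primes the paper uses.

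The key steps all hold. The identity $\binom{pa+r}{pb+s}=\binom{a}{b}\,U$ is exact, with $U$ a ratio of products of integers prime to $p$, so reducing $U$ modulo $p^2$ is legitimate even when $p \mid \binom{a}{b}$. Each full block equals $(p-1)!\,(1+pjH_{p-1})\equiv (p-1)! \pmod{p^2}$. The tails produce exactly $\binom{r}{s}\bigl(1+pa(H_r-H_{r-s})+pb(H_{r-s}-H_s)\bigr)$.

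The only omission is the case $b>a$, which your quotient of factorials tacitly excludes. There $pb+s\geq p(a+1)>pa+r$, so both sides vanish.

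Your closing paragraph, however, is false and should be removed. Matching block counts only cancels the constant factors $(p-1)!$. The linear corrections depend on the block index $j$, and they sum to $pH_{p-1}\bigl(\binom{a}{2}-\binom{b}{2}-\binom{a-b}{2}\bigr)=p\,b(a-b)\,H_{p-1}$, not to $p\bigl(a-b-(a-b)\bigr)H_{p-1}=0$. So the input $H_{p-1}\equiv 0 \pmod p$ is genuinely needed in your argument.

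That input is much weaker than Wolstenholme. It holds for every odd prime by pairing $1/i$ with $1/(p-i)$, and it is exactly the fact the paper records right after the statement. Citing that, rather than the mod $p^2$ form of Wolstenholme, is all your proof requires.
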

        \noindent This recovers classical Lucas after going mod $p$. We will also use the fact that $H_{p - 1} \equiv 0 \mod p$ for $p \geq 3$ since $H_{p - 1}$ consists of the sum of a complete set of non-zero residues mod $p$.
        
        We also need Stirling numbers of the second kind, which we recall are defined by (see \cite{SP00})
        \[
            {t \brace s} = \frac{1}{s!}\sum_{j = 0}^{s}(-1)^{j}{s \choose j}(s - j)^t \text{ for integers $t, s \geq 0$}.
        \]
        It can be shown by applying the operator $\left(x\frac{d}{dx}\right)^{t}$ to the binomial expansion of $(1 + x)^s$ that ${t \brace s} = 0$ if $s > t$. We adopt the convention that ${t \brace s} = 0$ if $s < 0$.
        
        While manipulating the congruences obtained using Proposition~\ref{Final congruence}, we make use of a version of Lucas for Stirling numbers of the second kind proved by S\'anchez-Peregrino.
        \begin{lemma}[{\cite[Corollary 2.2]{SP00}}]\label{Lucas for Stirling}
            For integers $i, x, y \geq 0$,
            \[
                {y + p^i \brace x} \equiv {y + 1 \brace x} + \sum_{j = 1}^{i}{y \brace x - p^j} \mod p.
            \]
        \end{lemma}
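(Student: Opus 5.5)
The plan is to work in the polynomial ring $\mathbb{F}_p[t]$ and use the defining expansion of Stirling numbers of the second kind in the falling factorial basis. For $m \geq 0$ write $(t)_m = t(t-1)\cdots(t-m+1)$. Then for every integer $n \geq 0$ we have the identity
\[
  t^n = \sum_{x \geq 0} {n \brace x} (t)_x \quad \text{in } \bZ[t].
\]
This is equivalent to the explicit formula recalled above via finite differences. The polynomials $(t)_x$ are monic of degree $x$, so their reductions form an $\mathbb{F}_p$-basis of $\mathbb{F}_p[t]$. Hence to prove the congruence it suffices to expand $t^{y+p^i}$ in this basis modulo $p$ and compare the coefficients of $(t)_x$.

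The key input is a description of $(t)_{p^j}$ modulo $p$ for $j \geq 1$. Over $\mathbb{F}_p$, the factors $t-a$ with $0 \leq a \leq p^j-1$ run over each residue class mod $p$ exactly $p^{j-1}$ times. Using $\prod_{c \in \mathbb{F}_p}(t-c) = t^p - t$, we get
\[
  (t)_{p^j} \equiv (t^p - t)^{p^{j-1}} = t^{p^j} - t^{p^{j-1}} \mod p .
\]
The same argument applied to $t-m$ gives $(t-m)_{p^j} \equiv (t^p-t)^{p^{j-1}} \equiv (t)_{p^j} \bmod p$ for any integer $m$. Since $(t)_{m+p^j} = (t)_m\,(t-m)_{p^j}$, this yields
\[
  (t)_m (t)_{p^j} \equiv (t)_{m+p^j} \mod p .
\]
Telescoping the first congruence over $j = 1, \ldots, i$ gives $t^{p^i} \equiv t + \sum_{j=1}^{i} (t)_{p^j} \bmod p$. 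For $i = 0$ this reads $t^{1} = t$, and the sum is empty, as it should be.

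Now multiply by $t^y$:
\[
  t^{y+p^i} \equiv t^{y+1} + \sum_{j=1}^{i} t^y (t)_{p^j} \mod p .
\]
The first term expands as $\sum_x {y+1 \brace x} (t)_x$. For the other terms, we write $t^y = \sum_m {y \brace m}(t)_m$ and apply the product congruence. This gives
\[
  t^y (t)_{p^j} \equiv \sum_m {y \brace m} (t)_{m+p^j} = \sum_x {y \brace x-p^j} (t)_x \mod p,
\]
where we use the convention ${t \brace s} = 0$ for $s < 0$. The left side expands as $\sum_x {y+p^i \brace x}(t)_x$, so comparing coefficients of $(t)_x$ in $\mathbb{F}_p[t]$ gives the stated congruence.

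The only step needing care is the congruence $(t-m)_{p^j} \equiv (t)_{p^j} \bmod p$ together with the product identity it feeds into. Everything else is formal manipulation in the falling factorial basis. The final coefficient comparison is legitimate because the $(t)_x$ remain a basis after reduction mod $p$.
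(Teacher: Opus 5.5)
Your proof is correct. The paper does not prove this lemma; it quotes it from S\'anchez-Peregrino \cite[Corollary 2.2]{SP00}. So your argument is a self-contained substitute for the citation rather than a variant of anything in the text.

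Your route works entirely in $\mathbb{F}_p[t]$ with the falling-factorial basis, and two congruences do all the work. The first, $(t)_{p^j} \equiv t^{p^j} - t^{p^{j-1}} \pmod p$, telescopes to $t^{p^i} \equiv t + \sum_{j=1}^{i}(t)_{p^j}$. The second, $(t)_m (t)_{p^j} \equiv (t)_{m+p^j} \pmod p$, turns multiplication by $(t)_{p^j}$ into a shift of the index by $p^j$, which shows exactly where the terms ${y \brace x - p^j}$ come from. The argument is short and uses only that $p$ is prime, so it holds for every $p$, not just $p \geq 5$. It would also make the paper independent of \cite{SP00}.

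The one step you leave implicit is the link between the paper's alternating-sum definition of ${t \brace s}$ and the expansion $t^n = \sum_x {n \brace x}(t)_x$. This is Newton's forward-difference formula applied to $t^n$ at $0$, and it deserves a sentence in a write-up.

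What citing \cite{SP00} buys instead is brevity in the preliminaries, and it places the congruence inside a general Lucas-type theory for Stirling numbers of the second kind.
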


    Finally, we prove the following existence lemma for the coefficients $\lambda_i$ for certain poly$\cdot$log functions which we use later.
    \begin{lemma}\label{New coefficient identities}
    Let $b p \leq n \leq (b + 1) p - 1$ for an integer $b \geq 0$. If $I = \{0, 1, \ldots, n, (b + 1)p\}$ and $z_i = i$ for $i \in I$, then there are $\lambda_i \in \Zp$ such that
    \begin{itemize}
        \item $\sum_{i \in I}\lambda_i z_i^j = 0$ for all $0 \leq j \leq n$,
        \item $\sum_{i \equiv a \!\!\!\mod p} \lambda_iz_i^j \equiv 0 \mod p^2$ for a fixed $0 \leq a \leq p - 1$ and all $0 \leq j \leq n$,
        \item $\lambda_i \equiv (-1)^{b - i/p}{b + 1 \choose i/p} \>\equiv (-1)^{bp - i}{(b + 1)p \choose i} \mod p$ if $p \mid i$,
        \item $\lambda_i \equiv 0 \> \equiv (-1)^{bp - i}{(b + 1)p \choose i}\mod p$ if $p \nmid i$.
    \end{itemize}
    Moreover, we may choose $\lambda_{(b + 1)p} = -1$ in $\Zp$.
\end{lemma}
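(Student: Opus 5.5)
The plan is to write down the unique (up to scaling) solution of the linear system in the first bullet explicitly and then check the congruences directly. Set $N = (b + 1)p$. The set $I$ has $n + 2$ elements, and the first bullet imposes $n + 1$ conditions of Vandermonde type on $(\lambda_i)_{i \in I}$. So the solution space is one-dimensional, spanned by the divided-difference coefficients $\lambda_i = c/\prod_{k \in I, k \neq i}(z_i - z_k)$.

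Normalizing $\lambda_N = -1$ forces $c = -\prod_{k=0}^{n}(N - k)$. For $0 \leq i \leq n$ a short simplification gives
\[
\lambda_i = (-1)^{n - i}\frac{N!}{(N - n - 1)!\, i!\,(n - i)!\,(N - i)} = (-1)^{n - i}{N \choose i}{N - i - 1 \choose n - i}.
\]
This shows that $\lambda_i \in \bZ \subset \Zp$ and settles the ``moreover'' clause and the first bullet simultaneously.

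The third and fourth bullets then follow from classical Lucas. If $p \nmid i$, then ${N \choose i} \equiv {b + 1 \choose i/p}\cdot 0 \equiv 0 \bmod p$. If $i = pm$, write $n = bp + c$ with $0 \leq c \leq p - 1$. Then $N - i - 1 = (b - m)p + (p - 1)$ and $n - i = (b - m)p + c$, so
\[
{N - i - 1 \choose n - i} \equiv {b - m \choose b - m}{p - 1 \choose c} \equiv (-1)^c \bmod p.
\]
Hence $\lambda_i \equiv (-1)^{n - i + c}{N \choose i} = (-1)^{bp - i}{N \choose i} \equiv (-1)^{b - m}{b + 1 \choose m} \bmod p$, as claimed. (For $i = N$ the formula also holds, since $-1 = (-1)^{bp - N}{N \choose N}$ because $p$ is odd.)

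The main obstacle is the second bullet, the congruence modulo $p^2$ on a single residue class. First I would try to reduce to the first bullet. On integers, the indicator of $i \equiv a \bmod p$ agrees modulo $p^2$ with the polynomial $f_a(x) = 1 - (x - a)^{p(p - 1)}$, because $(x - a)^{p(p - 1)} \equiv 1 \bmod p^2$ off the class and $\equiv 0 \bmod p^2$ on it. Thus $\sum_{i \equiv a}\lambda_i z_i^j \equiv \sum_{i \in I}\lambda_i z_i^j f_a(z_i) \bmod p^2$. The right side kills every monomial of degree $\leq n$. For the monomials $x^m$ with $m > n$ one has the standard formula $\sum_{i \in I}\lambda_i z_i^m = c\, h_{m - n - 1}(z_i : i \in I)$, with $h$ the complete homogeneous symmetric polynomial. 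So it remains to show that $c$ times the relevant combination of the $h$'s is divisible by $p^2$. Note that $c = -N(N - 1)\cdots(N - n)$ contains the factor $N$, and contains the factor $N - bp$ when $n \geq bp$, so $v_p(c) \geq 2$ whenever $b \geq 1$; this looks like the mechanism.

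For $b = 0$ this argument fails, and I would instead verify the congruence by hand. Only $\lambda_0$ and $\lambda_p = -1$ survive in the class $a = 0$. In the classes $a \neq 0$ one needs $\sum_{i \equiv a}\lambda_i \equiv 0 \bmod p^2$. For this I would apply Lemma~\ref{Lucas mod p^2} to ${N \choose i}$ and use $H_{p - 1} \equiv 0 \bmod p$ to expand each $\lambda_i$ modulo $p^2$, and then sum over the class.
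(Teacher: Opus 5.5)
Your formula $\lambda_i=(-1)^{n-i}\binom{N}{i}\binom{N-i-1}{n-i}$ is the same Cramer's-rule expression the paper derives. Your treatment of the first, third and fourth bullets and of $\lambda_{(b+1)p}=-1$ matches the paper's.

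\textbf{The second bullet for $b\ge 1$.} Here you take a genuinely different route, and it is already complete once you note that the relevant combination of $h$'s is an integer. For every $P\in\bZ[x]$, $\sum_{i\in I}\lambda_iP(z_i)$ equals $c$ times an integer combination of the integers $h_{m-n-1}(0,1,\dots,n,N)$. Moreover $v_p(c)\ge 2$, coming from the factors $N=(b+1)p$ and $N-bp=p$. Taking $P=x^j\bigl(1-(x-a)^{p(p-1)}\bigr)$ then handles every $a$, including $a=0$, and every $j$ at once.

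The paper argues differently. It pulls $(b+1)p\binom{(b+1)p-1}{n}$ out of $\lambda_i$, and for $a\neq 0$ reduces to showing $\sum_{i\equiv a}(-1)^i\frac{i^j}{(b+1)p-i}\binom{n}{i}\equiv 0 \bmod p$. It evaluates this by Lucas as a multiple of $\sum_{k=0}^{b}(-1)^k\binom{b}{k}$, and then deduces the case $a=0$ from the first bullet. Your argument has the advantage of exhibiting the natural modulus $p^{v_p(c)}$, which shows exactly when $p^2$ is available.

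\textbf{The case $b=0$.} Your planned by-hand verification cannot be carried out, because the mod $p^2$ congruence is false there whenever $n\ge 1$. For $1\le a\le n$ the class of $a$ in $I$ is just $\{a\}$, and $\lambda_a=(-1)^{n-a}\binom{p}{a}\binom{p-a-1}{n-a}$ has valuation exactly $1$. For instance, $b=0$, $n=1$ gives $\lambda_0=1-p$, $\lambda_1=p$, $\lambda_p=-1$, so $\sum_{i\equiv 1}\lambda_i=p$ and $\sum_{i\equiv 0}\lambda_i=-p$. No admissible rescaling repairs this: the first bullet determines $(\lambda_i)$ up to a scalar, and the third bullet forces $\lambda_0$ to be a unit.

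The paper's proof has the same hole. Its last step uses $\sum_{k=0}^{b}(-1)^k\binom{b}{k}=0$, which fails for $b=0$.

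What does hold for $b=0$ is the congruence mod $p$. Your own method gives this directly, since then $v_p(c)=v_p\bigl(p!/(p-n-1)!\bigr)=1$. That weaker statement is all the later argument uses. In the proof of Proposition~\ref{Final congruence} one only needs $v_p\bigl(\sum_{i\equiv a}\lambda_i\bigr)\ge 1+v_p([n]_{b+1})$, and $v_p([n]_1)=v_p(n)=0$ for $1\le n\le p-1$. So state the $p^2$ claim only for $b\ge 1$, with the mod $p$ version for $b=0$, rather than trying to verify it by hand.
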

\begin{proof}
    Set $\lambda_{(b + 1)p} = -1$. We move $\lambda_{(b + 1)p}((b + 1)p)^j$ in the first bullet point to the RHS and get
    \[
        \sum_{i = 0}^{n}\lambda_i i^j = ((b + 1)p)^j
    \]
    for each $0 \leq j \leq n$.
    Using the determinant formula for Vandermonde matrices, we see that this system of equations has a unique solution over $\Qp$. We use Cramer's rule to show that 
    the solution is integral and satisfies the three remaining bullet points.

    Let $A$ be the coefficient matrix for the system of equations displayed above. For $0 \leq i \leq n$, let $A_i$ be the matrix $A$ where the column with index $i$ is replaced by the column vector on the right.

    Using Vandermonde's determinant formula, we see that
    \begin{eqnarray}\label{Value of lambda_i}
        \lambda_i = \frac{\det A_i}{\det A} & = & \prod_{\substack{i' = 0 \\ i' \neq i}}^n\frac{(b + 1)p - i'}{i - i'} 
         = \frac{[(b + 1)p]_{n + 1}}{i!(n - i)!(-1)^{n - i}((b + 1)p-i)} \nonumber \\
        & = & (-1)^{n - i}{(b + 1)p \choose n + 1}\frac{(n + 1)}{(b + 1)p - i}{n \choose i} \nonumber \\
        & = & (-1)^{n - i}\frac{(b + 1)p}{(b + 1)p - i}{(b + 1)p - 1 \choose n}{n \choose i}
    \end{eqnarray}
    for $0 \leq i \leq n$.
    It suffices to prove the second bullet point for all $a \neq 0$ since the second bullet point for $a = 0$ follows from this and the first bullet point. Thus, it suffices to prove that for $1 \leq a \leq p - 1$, we have
    \[
        \sum_{i \equiv a \!\!\!\!\!\mod p}(-1)^i\frac{i^j}{(b + 1)p - i}{n \choose i} \equiv 0 \mod p.
    \]
    Write $n = bp + \epsilon$ and $i = kp + a$ for some $0 \leq \epsilon \leq p - 1$ and $k$ is a non-negative integer. By Lucas' theorem, we have
    \[
        {n \choose i} \equiv {b \choose k}{\epsilon \choose a} \mod p.
    \]
    If $\epsilon < a$, then this is congruent to $0$ modulo $p$ for all $k$. If $\epsilon \geq a$, then
    \begin{eqnarray*}
        \sum_{i \equiv a \!\!\!\!\!\mod p}(-1)^i\frac{i^j}{(b + 1)p - i}{n \choose i} & \equiv & \sum_{k = 0}^{b}-(-1)^{kp + a} (kp + a)^{j - 1}{b \choose k}{\epsilon \choose a} \\
        & \equiv & (-1)^{a + 1}a^{j - 1}{\epsilon \choose a}\sum_{k = 0}^{b}(-1)^k{b \choose k} = 0 \mod p.
    \end{eqnarray*}
    
    We prove the third bullet point. So assume $p \mid i$. Using ${m \choose n}{n \choose i} = {m \choose i}{m - i \choose n - i}$ and ${p - 1 \choose j} \equiv (-1)^j \mod p$, we see that
    \begin{eqnarray*}
        \lambda_i & = & (-1)^{n - i}\frac{(b + 1)p}{(b + 1)p - i}{(b + 1)p - 1 \choose i}{(b + 1)p - 1 - i \choose n - i} \\
        & = & (-1)^{n - i}{(b + 1)p \choose i}{(b - i/p)p + (p - 1) \choose (b - i/p)p + \epsilon} \equiv (-1)^{bp - i + 2\epsilon}{(b + 1)p \choose i} \mod p.
    \end{eqnarray*}

    The fourth bullet point is immediate from \eqref{Value of lambda_i}.
    \end{proof}

\section{Killing shallow sub-quotients}\label{Killing shallow sub-quotients}
    In this section, we first show that for $r \geq p$ and any $\cL$, the sub-quotient $F_{0, 1}$ of $\latticeL{k}$ dies modulo $\pi$. 
    
    Consider the canonical map with dense image
    \begin{eqnarray}\label{Iwahori uniformization}
        \frac{\IZind \SymE{r}}{(\tU - 1, \tW + 1)} \to \latticeL{k}.
    \end{eqnarray}
    Let $f(X, Y) = X^{p - 1}Y^{r - p + 1} - Y^{r}$ be a polynomial in $\SymE{r}$. This polynomial makes sense if $r \geq p - 1$. Using the formulae written on the bottom of page 7 in \cite{CG23}, we see that
    \begin{eqnarray*}
        (\tU\tW)\llbracket \id, f(X, Y)\rrbracket & = & \sum_{\lambda \in I_1} \left \llbracket \begin{pmatrix} \lambda & 1 \\ 1 & 0\end{pmatrix}, \begin{pmatrix}0 & 1 \\ 1 & -\lambda \end{pmatrix} \cdot f(X, Y) \right \rrbracket\\
        & = & \sum_{\lambda \in I_1} \left\llbracket \begin{pmatrix} \lambda & 1 \\ 1 & 0\end{pmatrix}, \left(Y^{p - 1}(X - \lambda Y)^{r - p + 1} - (X - \lambda Y)^{r}\right)\right\rrbracket \\
        & = & \sum_{\lambda \in I_1}\left \llbracket \begin{pmatrix} \lambda & 1 \\ 1 & 0 \end{pmatrix}, (-\lambda)^{r - p + 1}Y^r - (-\lambda)^r Y^r + g(X, Y) \right \rrbracket,
    \end{eqnarray*}
    where $g(X, Y)$ is a homogeneous polynomial in $X$ and $Y$ of degree $r$ with no pure power of $Y$. If $r \geq p$, then $r - p + 1 > 0$ and $(-\lambda)^{r - p + 1} = (-\lambda)^{r}$ for all $\lambda \in I_1$. (If $r = p - 1$, then this is not true for $\lambda = 0$.) This computation therefore shows that the image of $\tU\tW \llbracket \id, f(X, Y) \rrbracket$ under \eqref{Iwahori uniformization} is integral. Furthermore, this image projects to $0$ in $F_{0, 1}$.
    
    Next, using the fact that $\tU = 1$ and $\tW = -1$ on the left side of \eqref{Iwahori uniformization}, we see that $-\llbracket \id, f(X, Y)\rrbracket = \tU\tW \llbracket \id, f(X, Y) \rrbracket$. Therefore $-\llbracket \id, f(X, Y) \rrbracket$ projects to $0$ in $F_{0, 1}$. However, $-\llbracket \id, f(X, Y)\rrbracket = \llbracket \id, Y^r - X^{p - 1}Y^{r - p + 1} \rrbracket$ projects to a generator of $F_{0, 1}$. So, $F_{0, 1} = 0$.

    More generally for $i \geq 1$, we prove that the sub-quotient $F_{2(i - 1), 2(i - 1) + 1}$ vanishes if $r \geq i(p + 1) - 1$. Note that the function $\llbracket \id, X^{i - 1}Y^{r - i + 1} \rrbracket$ projects to a generator of $F_{2(i - 1), 2(i - 1) + 1}$. Define 
    \[
        f_i(X, Y) = \frac{Y^{r - i(p + 1) + 1}}{X} (-\theta)^{i},
    \]
    where $\theta = X^pY - XY^p$ is the Dickson polynomial.
    Using the definition of $\tU$ and $\tW$, we see that
    \begin{eqnarray*}
         (\tU\tW)\llbracket \id, f_i(X, Y) \rrbracket & = & \sum_{\lambda \in I_1} \left\llbracket \begin{pmatrix} \lambda & 1 \\ 1 & 0\end{pmatrix}, \frac{(X - \lambda Y)^{r - i(p + 1) + 1}}{Y} (\theta)^{i}\right\rrbracket \mod p,
    \end{eqnarray*}
    since $\GL_2(\Zp)$ acts on $\theta$ mod $p$ by $\det$ mod $p$. Since the largest power of $X$ in this polynomial is $i$, we see that it projects to $0$ in $F_{2(i - 1), 2(i - 1) + 1}$. On the other hand, as before, the LHS is $-\llbracket \id, f_i(X, Y) \rrbracket$, which projects to a generator of $F_{2(i - 1), 2(i - 1) + 1}$. Therefore $F_{2(i - 1), 2(i - 1) + 1} = 0$.
    
\section{Towards killing deep sub-quotients}\label{Killing deep sub-quotients}
In the spirit of \cite{CG23}, we first prove a congruence converting poly$\cdot$log functions into locally polynomial functions on $\Qp$ modulo $\pi\latticeL{k}$. 
Throughout this paper, $n$ is an integer in $[\lfloor r/2 \rfloor + 1, r]$
Recall that $b = \lfloor n/p \rfloor$. Define, $[n]_{m} = n (n - 1) \cdots (n - m + 1)$.

\begin{lemma}\label{Telescoping lemma}
    Let $5 \leq p \leq r \leq p^2 - p - 1$, $r/2 + b + 1 \leq n \leq r$, 
    and $z_0 \in \Zp$. Let $v_p(\cL) \leq r/2 - n$ and fix $x \in \bQ$ such that 
    $x + v_p(\cL) = r/2 - n - v_p([n]_{b + 1})$. Then for 
    \begin{enumerate}
        \item $g(z) = p^x(z - z_0)^n\logL(z - z_0)\mathbbm{1}_{\Zp}$
        \item $g(z) = p^x z^{r - n} (1 - zz_0)^n\logL(1 - zz_0)\mathbbm{1}_{p\Zp},$
    \end{enumerate}
    we have $g(z) \equiv g_2(z) \mod \pi\latticeL{k}$.
\end{lemma}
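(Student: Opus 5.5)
We follow the telescoping strategy of \cite{CG23}, which produced the analogous congruences for $r \leq p - 1$. There the key input is that the lattice $\latticeL{k}$ contains the images of the functions $p^{r/2 - n}\,\cdot$ (certain combinations of poly$\cdot$log functions). These combinations are obtained by applying the Hecke-type operators, i.e. the relations $\tU = 1$, $\tW = -1$ of \eqref{Iwahori uniformization}, to $\llbracket \id, X^{r-n}Y^{n}\rrbracket$-type elements.

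Concretely, we consider the element
\[
  h(z) = \sum_{i \in I} \lambda_i\, p^{x}(z - z_0 - i)^n \logL(z - z_0 - i)\mathbbm{1}_{z_0 + i + p\Zp}.
\]
In case 2 we use the analogous sum with $1 - zz_0$ replaced by translates, pulled back through $\beta$. Here $I = \{0, \ldots, n, (b+1)p\}$ and the $\lambda_i$ are as in Lemma~\ref{New coefficient identities}. The first bullet point of that lemma says that the polynomial part of degree $\leq n$ in the Taylor expansion cancels. So $h$ is, up to the factor $p^{x}$, a linear combination of the standard functions of \cite{CG23} that lie in the lattice. The normalization $x + v_p(\cL) = r/2 - n - v_p([n]_{b+1})$ is chosen so that the $\cL$-part of the logarithm, after differentiating $b+1$ times, lands exactly on the integral scale $p^{r/2 - n}$. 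This is what forces the factor $[n]_{b+1}$.

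The proof then has three steps.

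\textbf{Step 1.} Show that $h \in \latticeL{k}$, or more precisely that $h \equiv 0 \bmod \pi\latticeL{k}$. We expand $\logL(z - z_0 - i)$ on each disc around $z_0 + i$, using $\logL = \log + \cL\cdot v_p$-type corrections. We then use the first and second bullet points of Lemma~\ref{New coefficient identities}. The second bullet point gives the extra factor of $p^2$ on each residue class mod $p$, which absorbs the loss from $v_p(\cL) \leq r/2 - n$ and from the hypothesis $n \geq r/2 + b + 1$.

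\textbf{Step 2.} Compare $g$ with $h$. By the third and fourth bullet points, mod $p$ only the terms with $p \mid i$ survive, with coefficients $(-1)^{b-i/p}\binom{b+1}{i/p}$. Translating each of these terms back to the disc around $z_0$ via the $\Zp$-translation action, which preserves the lattice, we expect $g - h$ to reduce to a locally polynomial function. This function is of the form $p^{x}$ times a combination of $(z-z_0)^{j}\mathbbm{1}$ terms with $j \leq n$, coming from the Taylor coefficients of $(z-z_0-i)^n\log(z-z_0-i)$ on $z_0 + p\Zp$. Keeping only the terms that are not killed mod $\pi$ defines $g_2$.

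\textbf{Step 3.} Case 2 is reduced to case 1. We apply $\beta$ (or $w$), under which $z^{r-n}(1-zz_0)^n\logL(1 - zz_0)\mathbbm{1}_{p\Zp}$ corresponds to a function of the first type supported on $\Qp \setminus \Zp$ near $1/z_0$. We then repeat the argument, using the bound $r \leq p^2 - p - 1$ to keep $b \leq p - 2$, so that the binomial coefficients $\binom{b+1}{k}$ are units.

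The main obstacle is Step 1: checking the valuation bookkeeping. We must show that every tail term in the expansion of the logarithm (the $\log$ part and the $\cL$ part) has valuation strictly exceeding $r/2 - n$ after multiplying by $p^x$ and summing with the $\lambda_i$. The delicate terms are those of degree exactly $n+1, \ldots, n+b+1$. Their sums $\sum \lambda_i i^{j}$ are only divisible by $p^2$ on each residue class, and we must use the mod $p^2$ Lucas (Lemma~\ref{Lucas mod p^2}) to control the contributions where $v_p([n]_{b+1}) = 1$.
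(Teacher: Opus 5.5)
\paragraph{Gap.} Your proposal proves, at best, a different statement.

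In this lemma $g_2$ is not something you define by ``keeping the terms that are not killed mod $\pi$''. It is the fixed level-two Taylor approximation
\[
  g_2(z)=\sum_{a=0}^{p^2-1}\sum_{j=0}^{n-1}\frac{g^{(j)}(a)}{j!}(z-a)^j\mathbbm{1}_{a+p^2\Zp}.
\]
The lemma is about a \emph{single} function $g$ with an arbitrary centre $z_0\in\Zp$. No cancellation between different centres occurs, so the $\lambda_i$ of Lemma~\ref{New coefficient identities}, their mod $p$ and mod $p^2$ properties, and Lemma~\ref{Lucas mod p^2} play no role.

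Your Step~1 (that the $\lambda_i$-combination $h$ vanishes mod $\pi\latticeL{k}$) is in substance what Lemma~\ref{qp-zp part is 0} and Proposition~\ref{Final congruence} establish. Those results are deduced \emph{from} the present lemma: they need $g\equiv g_2$ to pass to locally polynomial functions. So Step~1 cannot serve as a proof of it. Your justification for Step~1 also fails. The poly$\cdot$log functions controlled in the lattice are global combinations $\sum_i\lambda_i(z-z_i)^n\logL(z-z_i)$ on all of $\Qp$. Cutting each term off by $\mathbbm{1}_{z_0+i+p\Zp}$ destroys that structure.

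Step~3 runs backwards. Type~2 functions are the pieces into which the $\Qp\setminus\Zp$ part of such a $\lambda_i$-combination decomposes after $z\mapsto 1/z$; this is exactly how they enter Proposition~\ref{Final congruence}. Moreover, $g_2$ is built from the discs $a+p^2\Zp\subset\Zp$. Transporting $g$ by $\beta$ to a function supported off $\Zp$ therefore does not transport the statement $g\equiv g_2$.

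\paragraph{Missing idea: telescoping.} What you need is the argument of \cite[Lemmas 8.3, 8.4]{CG23}. Let $g_h$ be the analogous approximation on the discs $a+p^h\Zp$. It suffices to show $g_h\equiv g_{h-1}$ mod $\pi\latticeL{k}$ for every $h\ge 3$ and then let $h\to\infty$. By \cite[Lemma 6.3]{CG23} this reduces to the coefficient congruence
\[
  \frac{g^{(j)}(a+\alpha p^{h-1})}{j!}\equiv\sum_{l=j}^{n-1}\frac{(\alpha p^{h-1})^{l-j}}{j!\,(l-j)!}\,g^{(l)}(a)\mod (p^{h-1})^{r/2-j}\pi
\]
for $0\le a\le p^{h-1}-1$, $0\le\alpha\le p-1$ and $0\le j\le n-1$.

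For type~1, split into two cases. If $v_p(a-z_0)<h-1$, Taylor expand at $a$. The degree-$(n-j)$ term carries $\logL(a-z_0)+H_n$, of valuation $\ge v_p(\cL)$. The tail terms carry $(\alpha p^{h-1}/(a-z_0))^l$. If $v_p(a-z_0)\ge h-1$, every term is individually small.

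For type~2 no $\cL$ appears at all. On $p\Zp$ we have $1-zz_0\in 1+p\Zp$, so $\logL(1-zz_0)=\log(1-zz_0)$ is analytic there. The Leibniz formula gives $v_p(g^{(m)}(a))\ge -v_p([n]_{b+1})$. Combining this with $v_p((n-1+l)!)\le (n-1+l)/(p-1)$, $n-r/2\ge b+1$ and $n\le (b+1)p-1$ handles the tail.

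The arithmetic input throughout is only the normalization of $x$, the inequalities of Remark~\ref{Bound on x}, and $n\le r<p^2$. The precise choice of $x$, which you try to explain via differentiating $b+1$ times, matters only later. There it gives the $z^{n-b-1}$ term of Proposition~\ref{Final congruence}, whose coefficient involves $\binom{n}{n-b-1}$, exactly the valuation of a generator.
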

\begin{remark}\label{Bound on x} 
    Note that $x + v_p(\cL) = r/2 - n - v_p([n]_{b + 1})$ and $v_p(\cL) \leq r/2 - n$ imply that $x \geq - v_p([n]_{b + 1}) \geq -1$.
    Also, the condition $r/2 + b + 1 \leq n$ implies that $n - v_p([n]_{b + 1}) > r/2$. This can be seen separately for $n < p$ and $n \geq p$.
    Also, $r/2 + b + 1 \leq r$ implies that $r \geq 2(b + 1)$.
\end{remark}
\begin{proof}
    This generalizes \cite[Lemma 8.4]{CG23}. Since the proof is just some general algebra and \cite[Lemma 8.3]{CG23}, we only prove the analog of \cite[Lemma 8.3]{CG23} here.

    So we show that for $h \geq 3$, $0 \leq a \leq p^{h- 1} - 1$, $0 \leq \alpha \leq p - 1$ and $0 \leq j \leq n - 1$, we have
    \begin{eqnarray}\label{Congruence in telescoping lemma}
        \frac{g^{(j)}(a + \alpha p^{h - 1})}{j!} \equiv \frac{g^{(j)}(a)}{j!} + \alpha p^{h - 1}\frac{g^{(j + 1)}(a)}{j!} + \cdots + \frac{(\alpha p^{h - 1})^{n - 1 - j}}{j!(n - 1 - j)!}g^{(n - 1)}(a) \mod (p^{h - 1})^{r/2 - j}\pi.
    \end{eqnarray}

    First  assume that $g(z) = p^x(z - z_0)^n\logL(z - z_0)\mathbbm{1}_{\Zp}$.
    \begin{itemize}
        \item $v_p(a - z_0) < h - 1$: Write the expansion
        \begin{eqnarray}\label{Taylor expansion close to z_0}
            \frac{g^{(j)}(a + \alpha p^{h - 1})}{j!} & = & \frac{g^{(j)}(a)}{j!} + \cdots + \frac{(\alpha p^{h - 1})^{n - 1 - j}}{j!(n - 1 - j)!}g^{(n - 1)}(a) \\
            & & + \frac{(\alpha p^{h - 1})^{n - j}}{j!(n - j)!}n!p^{x}[\logL(a - z_0) + H_n] \nonumber \\
            & & + \sum_{l \geq 1}\frac{(\alpha p^{h - 1})^{n - j + l}}{j!(n - j + l)!}n! p^{x} \frac{(-1)^{l - 1}}{(a - z_0)^l}(l - 1)!.\nonumber
        \end{eqnarray}

        The valuation of the term in the second line on the RHS is $\geq (h - 1)(n - j) + r/2 - n - v_p([n]_{b + 1})$ due to:
        \begin{itemize}
            \item $\frac{n!}{j!(n - j)!} \in \Zp$ for all $j$,
            \item $v_p(\logL(a - z_0)) \geq \min\{v_p(\cL), 1\} = v_p(\cL)$,
            \item $v_p(H_n) \geq - \lfloor \log_p(n) \rfloor \geq -1 \geq r/2 - n \geq v_p(\cL).$
        \end{itemize}
        This is strictly greater than $(h - 1)(r/2 - j)$ for $h \geq 3$ because $n - v_p([n]_{b + 1}) > r/2$.

        Next, the valuation of the $l$-th summand in the last sum on the RHS is $\geq - v_p([n]_{b + 1}) + (h - 1)(n - j) + l - \log_p(n - j + l)$ by looking at the display just below \cite[(20)]{CG23}, which we copy here for ease of checking:
        \[
            p^{x}(\alpha p^{h - 1})^{n - j}\frac{n!}{j!(n - j)!}\left(\frac{\alpha p^{h - 1}}{a - z_0}\right)^{l}(-1)^{l - 1}\frac{1}{l{n - j + l \choose l}}.
        \]
        So for $h \geq 3$ and $l \geq 1$, we have to check
        \[
            - v_p([n]_{b + 1}) + (h - 1)(n - j) + l - \log_p(n - j + l) > (h - 1)(r/2 - j).
        \]
        This is equivalent to
        \[
            p^{- v_p([n]_{b + 1}) + 2(n - r/2) + l} > n + l
        \]
        for $l \geq 1$, which is the special case $h = 3$ and $j = 0$.
        \begin{itemize}
            \item Base case: LHS $= p^{2(n - r/2) - v_p([n]_{b + 1}) + 1} \geq p^2 > n + 1 =$ RHS. This is true for $p \geq 2$.
            \item Derivative: $\mathrm{LHS}' = p^{2(n - r/2) - v_p([n]_{b + 1}) + l}\mathrm{\ln}\> p > (n + 1)\mathrm{ln}\>p > 1 = \mathrm{RHS}'$. This is true for $p \geq 2$.
        \end{itemize}
        So the graph of the LHS lies above the graph of the RHS for $l \geq 1$.
        Therefore, we have shown that the second and the third lines on the RHS of \eqref{Taylor expansion close to z_0} die modulo $(p^{h - 1})^{r/2 - j}\pi$.

    \item $v_p(a - z_0) \geq h - 1$: Using the derivative formula:
    \[
        g^{(l)}(z) = p^x\frac{n!}{(n - l)!}((z - z_0)^{n - l}\logL(z - z_0) + (H_n - H_{n - l})(z - z_0)^{n - l})\mathbbm{1}_{\Zp},
    \]
    with $l = j, j + 1, \ldots, n - 1$, we can write
    \[
        (\alpha p^{h - 1})^{l - j}\frac{g^{(l)}(z)}{j!(l - j)!} = (\alpha p^{h - 1})^{l - j}p^x{l \choose j}{n \choose l}((z - z_0)^{n - l}\logL(z - z_0) + (H_n - H_{n - l})(z - z_0)^{n - l})\mathbbm{1}_{\Zp}.
    \]
    Using this computation with $z = a + \alpha p^{h - 1}$ on the LHS and $z = a$ on the RHS of \eqref{Congruence in telescoping lemma}, we see that each term there has valuation $\geq (h - 1)(n - j) + r/2 - n - v_p([n]_{b + 1})$. This is clearly greater than $(h - 1)(r/2 - j)$ for $h \geq 3$ as $n - v_p([n]_{b + 1})> r/2$.

    So in this case, \eqref{Congruence in telescoping lemma} turns out to be $0 \equiv 0 \mod (p^{h - 1})^{r/2 - j}\pi$ exactly as in the analogous case in \cite[Lemma 8.3]{CG23}.
    \end{itemize}

    Next, assume that $g(z) = p^xz^{r - n}(1 - zz_0)^n\logL(1 - zz_0)\mathbbm{1}_{p\Zp}$. For this function, we do not need to write down cases.

    Since this function is identically $0$ outside $p\Zp$ and therefore the same holds for all its derivatives, we prove \eqref{Congruence in telescoping lemma} only for $p \mid a$. Exactly as in \cite{CG23}, we use the fact that $\log(1 + T)$ has a Taylor expansion at $0$ for $p \mid T$ and we write the Taylor expansion of $g^{(j)}(a + \alpha p^{h - 1})$ as:
    \begin{eqnarray}\label{Taylor expansion far from z_0}
        \frac{g^{(j)}(a + \alpha p^{h - 1})}{j!} & = & \frac{g^{(j)}(a)}{j!} + \cdots + \frac{(\alpha p^{h - 1})^{n - 1 - j}}{j!(n - 1 - j)!}g^{(n - 1)}(a) \\
        && + \sum_{l \geq 1}\frac{(\alpha p^{h - 1})^{n - 1 - j + l}}{j!(n - 1 - j + l)!}g^{(n - 1 + l)}(a), \nonumber
    \end{eqnarray}
    where the derivatives for $j \geq 0$ are given by the generalized Leibnitz's rule as in \cite[(22)]{CG23}:
    \begin{eqnarray*}
        g^{(j)}(z) & = & p^x\sum_{m = 0}^{j}{j \choose m}\left.\begin{cases} \dfrac{(r - n)!z^{r - n - j + m}}{(r - n - j + m)!} & \!\!\!\! \text{ if } m \geq j - r + n \\ 0 & \!\!\!\!\text{ if } m < j - r + n\end{cases}\right\} \> \cdot \>  (-z_0)^m  \\
        && \left.\begin{cases}\dfrac{n!}{(n - m)!}\left[(1 - zz_0)^{n - m}\logL(1 - zz_0) + (H_n - H_{n - m})(1 - zz_0)^{n - m}\right] & \!\!\!\! \text{ if } m \leq n \\ \dfrac{n!(-1)^{m - n - 1}(m - n - 1)!}{(1 - zz_0)^{m - n}} & \!\!\!\! \text{ if } m > n\end{cases}\right\}\mathbbm{1}_{p\Zp}(z). \nonumber
    \end{eqnarray*}
The only change is that we have written the exact formula for $*$. Using this formula, we see that the valuation of $g^{(j)}(a)$ is greater than or equal to $- v_p([n]_{b + 1})$. The bound on $x$ is the reason for
this, and everything else is integral. Here we need the fact that $\frac{n!}{(n - m)!}(H_n - H_{n - m}) \in \Zp$.

So, the $l$-th term in the second line on the RHS of \eqref{Taylor expansion far from z_0} has valuation $\geq - v_p([n]_{b + 1}) + (h - 1)(n - 1 - j + l) - v_p((n - 1 + l)!) \geq (h - 1)(n - 1 - j + l) - \frac{n - 1 + l}{p - 1} - v_p([n]_{b + 1})$. Now we have to check
\[
    (h - 1)(n - 1 - j + l) - \frac{n - 1 + l}{p - 1} - v_p([n]_{b + 1}) > (h - 1)(r/2 - j)
\]
for $h \geq 3$, $l \geq 1$, $0 \leq j \leq n - 1$. Equivalently, we must prove
\[
    (h - 1)(n - 1 - r/2 + l) - v_p([n]_{b + 1}) > \frac{n - 1 + l}{p - 1},
\]
for $h \geq 3$, $l \geq 1$. Since the term that is multiplied with $(h - 1)$ is positive and since the RHS is independent of $h$, it is enough to prove
\[
    2(n - 1 - r/2 + l) - v_p([n]_{b + 1}) > \frac{n - 1 + l}{p - 1},
\]
for $l \geq 1$. Noting $n - r/2 \geq b + 1$, it is enough to check
\[
    2(b + l) - v_p([n]_{b + 1}) > \frac{n - 1 + l}{p - 1}.
\]
Rearranging, it is enough to check
\[
    \left(2 - \frac{1}{p - 1}\right)l + 2b - 1 > \frac{n - 1}{p - 1}
\]
for $l \geq 1$. It is enough to check the base case $l = 1$, which is
\[
    2b + 1 > \frac{n}{p - 1}.
\]
This is easily checked.
Therefore we have proved \eqref{Congruence in telescoping lemma} for the second function. \qedhere
\end{proof}

\begin{lemma}\label{qp-zp part is 0}
    Let $5 \leq p \leq r \leq p^2 - p - 1$ and $r/2 + b + 1 \leq n \leq r$. Let $x + v_p(\cL) = r/2 - n - v_p([n]_{b + 1})$ and let $v_p(\cL) \leq r/2 - n$. 
    Let $I$, $z_i$ and $\lambda_i$ be as in Lemma \ref{New coefficient identities} and let
    \[
        f(z) = \sum_{i \in I}p^x \lambda_iz^{r - n}(1 - zz_i)^n\logL(1 - zz_i)\mathbbm{1}_{p\Zp}.
    \]
    Then, we have $f(z) \equiv 0 \mod \pi \latticeL{k}$.
\end{lemma}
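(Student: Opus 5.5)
By Lemma~\ref{Telescoping lemma}(2), each summand $p^x\lambda_i z^{r-n}(1-zz_i)^n\logL(1-zz_i)\mathbbm{1}_{p\Zp}$ is congruent modulo $\pi\latticeL{k}$ to its truncation $g_2$. So $f$ is congruent to the corresponding combination of truncations, and the task is to show that this locally polynomial function is itself $\equiv 0$. The plan is to expand on $p\Zp$ and then apply the vanishing identities of Lemma~\ref{New coefficient identities}.

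On $p\Zp$ we have $1-zz_i \in 1+p\Zp$, so
\[
\logL(1-zz_i)=\log(1-zz_i)=-\sum_{m\ge1}\frac{(zz_i)^m}{m}.
\]
We expand $(1-zz_i)^n\log(1-zz_i)$ as a power series $\sum_{t\ge0}c_t z_i^t z^t$, whose coefficients $c_t\in\bQ$ do not depend on $i$. Hence
\[
f(z)=p^x z^{r-n}\sum_{t}c_t\Big(\sum_{i\in I}\lambda_i z_i^t\Big)z^t\,\mathbbm{1}_{p\Zp}.
\]
This removes the $\cL$-dependence entirely, since only the genuine logarithm appears on $p\Zp$.

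Next, by the first bullet of Lemma~\ref{New coefficient identities}, $\sum_i\lambda_i z_i^t=0$ for $0\le t\le n$. Every term with $t\le n$ therefore vanishes identically, and only terms $z^{r-n+t}$ with $t\ge n+1$ remain, i.e.\ powers of degree $> r$. I would then show that the remaining tail $p^x\sum_{t>n}c_t(\sum_i\lambda_i z_i^t)z^{r-n+t}\mathbbm{1}_{p\Zp}$ lies in $\pi\latticeL{k}$. To do so, I would rewrite it via $z\mapsto pz$ (the $\beta$-translate) as $\sum_{s>r} p^{s}a_s z^{s}\mathbbm{1}_{\Zp}$ and use the standard fact, as in \cite{CG23}, that $p^{s-r/2}$ times a monomial $z^s\mathbbm{1}_{\Zp}$ with $s>r$ is integral, being the image of an integral element up to the appropriate normalization. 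It then suffices to check that $v_p\big(p^x c_t\sum_i\lambda_i z_i^t\big)+(r-n+t)-r/2>0$ for all $t>n$.

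The valuation estimate is the main obstacle. We have $x\ge -v_p([n]_{b+1})\ge -1$ by Remark~\ref{Bound on x}, and $v_p(c_t)\ge -\log_p t$ roughly, coming from the $1/m$ factors. The worst term is $t=n+1$, where the exponent is about $n+1-r/2-1-\log_p(n+1)$. Since $n-r/2\ge b+1$, this is positive when $\log_p(n+1)<b+1$, i.e.\ $n+1<p^{b+1}$, which holds. If the crude bound is too tight for small $b$ (e.g.\ $b=0$), I would invoke the extra divisibility $\sum_i\lambda_iz_i^t\equiv0\bmod p$ for $t\ge1$. This follows from the third and fourth bullets: modulo $p$ only indices $i\equiv0$ contribute, and $z_i^t\equiv0$ for those. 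Before that, I would try the finer statement that the second bullet gives $p^2$ divisibility residue-class by residue-class for $t\le n$, and extend it by a similar Cramer-type argument to $t=n+1$. Larger $t$ gain a factor $p$ from $z^t$ on $p\Zp$ and so are comfortably fine.
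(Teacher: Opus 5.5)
Your cancellation is the same one the paper uses: on $p\Zp$ one has $\logL(1-zz_i)=\log(1-zz_i)$, and $\sum_{i\in I}\lambda_i z_i^t=0$ for $t\le n$ kills every term of degree $\le r$. There is, however, a genuine gap in how you dispose of what is left. After the cancellation you have the exact identity $f(z)=\sum_{t\ge n+1}A_t z^{r-n+t}\mathbbm{1}_{p\Zp}$, with $A_t=p^x c_t\Lambda_t$ and $\Lambda_t=\sum_{i\in I}\lambda_i z_i^t$. This is a convergent series of monomials $z^s$ of degree $s>r$. You discard it by a ``standard fact'' that such monomials become integral after a factor $p^{\pm(s-r/2)}$.

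Nothing of this kind is available here. The criteria the paper uses, \cite[Lemma 6.3, Lemma 6.4]{CG23}, are only ever applied to locally polynomial functions $(z-a)^j\mathbbm{1}_{a+p^h\Zp}$ with $j\le r$. A monomial of degree $s>r$ is not of that form. Bounding it in $\latticeL{k}$ needs an approximation-plus-telescoping argument of exactly the type of Lemma~\ref{Telescoping lemma}, together with convergence in the Banach space; that is precisely the role the telescoping lemma plays in the paper. Your normalization is also not pinned down: integrality of $p^{s-r/2}z^s\mathbbm{1}_{\Zp}$, as you state it, does not yield the $s$-dependent criterion $v_p(A_t)+(r-n+t)-r/2>0$ that you then check. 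Finally, your first sentence reduces $f$ to $f_2$, but $f_2$ is never used afterwards.

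The repair is to apply your expansion to the coefficients of $f_2$ rather than to $f$. This is what the paper does: it obtains $f^{(j)}(\alpha p)$ from Leibniz's rule and then uses the same identities. Only the balls $\alpha p+p^2\Zp$ matter, and since the series converges on $p\Zp$,
\[
\frac{f^{(j)}(\alpha p)}{j!}=\sum_{t\ge n+1}A_t{r-n+t\choose j}(\alpha p)^{r-n+t-j},\qquad 0\le j\le n-1 .
\]
You then need three facts:
\begin{itemize}
\item $x\ge -1$, by Remark~\ref{Bound on x};
\item $v_p(c_t)\ge-\lfloor\log_p t\rfloor$;
\item $t-\lfloor\log_p t\rfloor$ is nondecreasing in $t$.
\end{itemize}
Together these give valuation $\ge r-j-\lfloor\log_p(n+1)\rfloor\ge r-j-1>r/2-j$, using $n+1<p^2$ and $r>2$. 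So \cite[Lemma 6.3]{CG23} puts every term of $f_2$ in $\pi\latticeL{k}$. Done this way, your route is a slightly cleaner version of the paper's, since it avoids the loss of $v_p(j!)$.

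Your last paragraph then becomes unnecessary. In any case it miscounts the degree, which is $r-n+t$ rather than $t$. The correct margin at $t=n+1$ is $r/2-\lfloor\log_p(n+1)\rfloor\ge r/2-1>0$, so neither extra divisibility of $\Lambda_t$ nor a Cramer-type extension is needed.
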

\begin{proof}
    By Lemma \ref{Telescoping lemma} (2), we see that $f(z) \equiv f_2(z) \mod \pi \latticeL{k}$.

    Recall
    \[
        f_2(z) = \sum_{a = 0}^{p^2 - 1}\sum_{j = 0}^{n - 1}\frac{f^{(j)}(a)}{j!}(z - a)^j\mathbbm{1}_{a + p^2\Zp}.
    \]

    Since $f(z)$ has $\mathbbm{1}_{p\Zp}$ as a factor, so do its derivatives and therefore the $p\nmid a$ summands in the display above can be dropped. For the $p \mid a$ summands, write $a = \alpha p$ for $0 \leq \alpha \leq p - 1$:
    \[
        f_2(z) = \sum_{\alpha = 0}^{p - 1}\sum_{j = 0}^{n - 1}\frac{f^{(j)}(\alpha p)}{j!}(z - \alpha p)^j\mathbbm{1}_{\alpha p + p^2\Zp}.
    \]

    Now the derivative of $f$ is written in the proof of Lemma~\ref{Telescoping lemma}. 
    Since $j \leq n - 1$, it simplifies to:
    \begin{eqnarray*}
        f^{(j)}(z) & = & \sum_{i \in I}p^x\lambda_i\sum_{m = 0}^{j}{j \choose m}\left.\begin{cases} \dfrac{(r - n)!z^{r - n - j + m}}{(r - n - j + m)!} & \!\!\!\! \text{ if } m \geq j - r + n \\ 0 & \!\!\!\!\text{ if } m < j - r + n\end{cases}\right\} \> \cdot \>  (-z_i)^m  \\
        && \left[\dfrac{n!}{(n - m)!}(1 - zz_i)^{n - m}\logL(1 - zz_i) + \dfrac{n!}{(n - m)!}(H_n - H_{n - m})(1 - zz_i)^{n - m}\right] \mathbbm{1}_{p\Zp}(z). \nonumber
    \end{eqnarray*}
    We have distributed $\frac{n!}{(n - m)!}$ over the terms in the square brackets to make the coefficients of powers of $z$ in the second term integral.

    Substitute $z = \alpha p$ in $f^{(j)}(z)$. Then expanding
    \[
        (1 - \alpha p z_i)^{n - m}\logL(1 - \alpha p z_i) = \sum_{l \geq 1}c_l(\alpha p z_i)^l
    \]
    with $c_l \in \bQ$ satisfying $v_p(c_l) \geq - \lfloor \log_p(l) \rfloor$, as well as the second term in the square brackets, and collecting like powers of $z = \alpha p$, we get a Taylor expansion of the expression in the square brackets. Next, we use the identities $\sum_{i \in I}\lambda_i z_i^j = 0$ for $0 \leq j \leq n$ to kill the terms in this Taylor expansion of degree $\leq n - m$. We have to shift from $n$ to $n - m$ because of the $(-z_i)^m$ factor at the end of the first line.

    To show that $f_2(z) \equiv 0 \mod \pi\latticeL{k}$ using \cite[Lemma 6.3]{CG23}, we have to show that
    \begin{eqnarray}\label{Wishing that derivatives have large valuation}
        v_p(f^{(j)}(\alpha p)) - v_p(j!) > r/2 - j.
    \end{eqnarray}

    By Remark~\ref{Bound on x}, the valuation of $f^{(j)}(\alpha p)$ is
    \[
        \geq \min_{l}\{x + r - n - j + m + l - \lfloor\log_p(l)\rfloor\} \geq \min_{l}\{- 1 + r - n - j + m + l - \log_p(l)\},
    \]
    where the minimum is taken over $l \geq n - m + 1$.

    It suffices to check
    \[
        -1 + r - n - j + m + l - \log_p(l) - v_p(j!) > r/2 - j
    \]
    for $l \geq n - m + 1$. It's enough to check
    \[
       -1 + r - n + m + l - \log_p(l) - v_p(j!) > r/2.
    \]
    Doing some algebra, the question becomes
    \[
        p^{-1 + r/2 - n + m + l - v_p(j!)} \stackrel{?}{>} l.
    \]
    \begin{itemize}
        \item Base case: LHS $= p^{r/2 - v_p(j!)} \geq p^{r/2 - v_p(r!)} > r + 1 \geq $ RHS for $p \geq 5$.
        \item Derivative: LHS$' = p^{-1 + r/2 - n + m + l -v_p(j!)}\ln(p) > (r + 1)\ln(p) > 1 =$ RHS$'$.
    \end{itemize}

    This computation proves \eqref{Wishing that derivatives have large valuation}. So $f_2(z) \equiv 0 \mod \pi\latticeL{k}$.
\end{proof}

Using the lemmas above, we can now prove a master congruence.

\begin{Proposition}\label{Final congruence}
    Let $5 \leq p \leq r \leq p^2 - p - 1$. Let $n, b$ be non-negative integers such that $bp \leq n \leq (b + 1)p - 1$, and $r/2 + b + 1 \leq n \leq r$. For $v_p(\cL) < r/2 - n$, set
        \[
            g(z) = p^x\left[\sum_{i \in I} \lambda_i (z - i)^n\logL(z - i)\right],
        \]
    where $x \in \bQ$ with $x + v_p(\cL) = r/2 - n - v_p([n]_{b + 1})$ and $I$, $\lambda_i \in \Zp$ as in Lemma~\ref{New coefficient identities}.
    Then, we have
        \begin{eqnarray*}
            g(z) & \equiv & \sum_{a = 1}^{n - bp}\sum_{j = \lceil r/2 \rceil}^{n - 1}{n \choose j}{\epsilon \choose a}p^{x + n - j}\frac{(-1)^{- a - j + b - 1}}{a}{(b + 1)p \choose n + 1}(n + 1)\cL b!{n - j \brace b}(z - a)^j\mathbbm{1}_{a + p\Zp} \\
            && \quad + \sum_{j = \lceil r/2 \rceil - 1}^{n - 1}{n \choose j}p^{x + n - j}(-1)^{n - j}\cL *_j z^j\mathbbm{1}_{p\Zp} \mod \pi \latticeL{k},
        \end{eqnarray*}
        where
    \begin{eqnarray*}
        *_j & = & {(b + 1)p - 1 \choose n}(-1)^{n}\Bigg[(-1)^{b + 1}{n - j \brace b + 1}(b + 1)! + pH_{\epsilon}(-1)^{b + 1}{n - j + 1 \brace b + 1}(b + 1)!  \\
        && \qquad \qquad \qquad \qquad \qquad \qquad -(-1)^{b + 1}(b + 1)^{n - j} - pH_{\epsilon}(-1)^{b + 1}(b + 1)^{n - j + 1}\Bigg] - (b + 1)^{n - j}.
    \end{eqnarray*}
    In particular, 
    \[
        *_j \equiv -{n - j \brace b + 1}(b + 1)! - pH_{\epsilon}{n - j \brace b}(b + 1)! \mod p^2
    \]
    and therefore,
    \begin{eqnarray*}
        *_j & = & \begin{cases}
            0 \mod p & \text{ if } j \geq n - b \\
            -(b + 1)! \mod p & \text{ if } j = n - b - 1,
        \end{cases} \\
        *_j & = & 
        \begin{cases}
            0 \mod p^2 & \text{ if } n - b + 1 \leq j \leq n - 1 \\
            -p H_{\epsilon}(b + 1)! \mod p^2 & \text{ if } j = n - b \\
            -(b + 1)! - pH_{\epsilon}(b + 1)!{b + 1 \choose 2} \mod p^2 & \text{ if } j = n - b - 1.
        \end{cases}
    \end{eqnarray*}
\end{Proposition}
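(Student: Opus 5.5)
The plan is to follow the template of \cite[Proposition 9.6]{CG23}. First we split $g$ by restricting to $p\Zp$ and to the residue discs $a + p\Zp$ for $1 \leq a \leq p-1$. On $\Zp \setminus p\Zp$ and on $p\Zp$ we also use the functional equation of $\logL$ under $\beta$. Concretely, we write $g = g\mathbbm{1}_{\Zp} + g\mathbbm{1}_{\Qp\setminus\Zp}$. The second piece is moved by the action of $\beta$ (or $w$) to a function supported on $p\Zp$ of the form
\[
\sum_{i \in I}p^x\lambda_i z^{r-n}(1 - zz_i)^n\logL(1 - zz_i)\mathbbm{1}_{p\Zp},
\]
plus a polynomial-times-$\logL(z)$ term. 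The $\logL(z)$ term is killed by the first bullet of Lemma~\ref{New coefficient identities}, since $\sum_i \lambda_i z_i^j = 0$ for $j \leq n$. Lemma~\ref{qp-zp part is 0} then says the main piece is $\equiv 0 \bmod \pi\latticeL{k}$. So it remains to handle $g\mathbbm{1}_{\Zp}$. By Lemma~\ref{Telescoping lemma}(1), applied to each summand (translates of $z^n\logL z$ by $z_0 = i$), we have $g\mathbbm{1}_{\Zp} \equiv g_2$. That is, $g$ is replaced by its locally polynomial truncation on discs of radius $p^{-2}$:
\[
\sum_a\sum_{j<n} \frac{g^{(j)}(a)}{j!}(z-a)^j\mathbbm{1}_{a+p^2\Zp}.
\]

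Next we analyse $g^{(j)}(a)/j!$ using
\[
g^{(j)}(z)/j! = p^x\binom{n}{j}\sum_i\lambda_i\bigl[(z-i)^{n-j}\logL(z-i) + (H_n - H_{n-j})(z-i)^{n-j}\bigr].
\]
For $i$ with $i \not\equiv a \bmod p$, the term $\logL(a-i)$ is an honest logarithm of a unit times $p$-power. We write $\logL(a - i) = \log(\text{unit}) + \cL\, v_p(a-i)$ style splittings, and Taylor expand in $a$. The second bullet of Lemma~\ref{New coefficient identities} (congruence mod $p^2$ on each residue class) together with $x \geq -1$ will kill everything except terms carrying a factor of $\cL$. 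Those terms come from the finitely many $i$ in the same residue class as the disc, where $v_p(a-i) \geq 1$. Collecting these $\cL$-terms, and using \cite[Lemma 6.3]{CG23} to coarsen discs from $p^2$ to $p$ and to discard all $j$ with $x + n - j > r/2 - j$ (this is why $j$ starts at $\lceil r/2\rceil$ or $\lceil r/2\rceil - 1$), we get exactly the two displayed sums. The coefficient on $a + p\Zp$, $a \neq 0$, only involves $i = kp + a$, with $\lambda_i$ given by \eqref{Value of lambda_i}. Lucas (Lemma~\ref{Lucas mod p^2}) reduces $\binom{n}{i}$ to $\binom{b}{k}\binom{\epsilon}{a}$. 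Then the sum $\sum_k(-1)^k\binom{b}{k}k^{n-j}$ produces $(-1)^b b!{n-j\brace b}$ via the defining formula for Stirling numbers. The coefficient on $p\Zp$ involves $i = kp$ for $0 \leq k \leq b+1$ (including $i = (b+1)p$ with $\lambda = -1$). Here we need $\lambda_{kp}$ to precision mod $p^2$, using Lemma~\ref{Lucas mod p^2} to pick up the $pH_\epsilon$ corrections. Summing $\sum_k(-1)^k\binom{b+1}{k}k^{n-j}$ and its $k^{n-j+1}$ variant gives the bracket in $*_j$. The stray $-(b+1)^{n-j}$ comes from separating the $k = b+1$ term, which uses $\lambda = -1$ exactly rather than the Vandermonde formula.

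Finally, the "in particular" statements are deduced as follows. Reduce ${(b+1)p-1\choose n}(-1)^n$ mod $p^2$ via Lemma~\ref{Lucas mod p^2}, and note that it is $\equiv 1$ up to the $pH_\epsilon$-type term. Use $(b+1)!{t\brace b+1} + (b+1)^{t}$-type cancellations, together with the recurrence ${t+1\brace b+1} = (b+1){t\brace b+1} + {t\brace b}$, to convert ${n-j+1\brace b+1}$ minus $(b+1)$ times ${n-j \brace b+1}$ into ${n-j\brace b}$. Then vanishing of ${t\brace s}$ for $s>t$ gives the case table, using ${b+1\brace b} = \binom{b+1}{2}$.

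I expect the main obstacle to be the second step: the careful $p$-adic bookkeeping on $p\Zp$ and on the discs $a+p\Zp$. This requires tracking $\lambda_i$ modulo $p^2$ rather than $p$, and checking that every non-$\cL$ term and every term with $j$ below the stated range has valuation exceeding $r/2 - j$ (uniformly using $x \geq -1$ and $v_p(\cL) < r/2-n$). There, I would first try to mimic the estimates in \cite[Proposition 9.6]{CG23} verbatim, inserting the Stirling identity.
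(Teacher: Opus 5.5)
Your treatment of $g\mathbbm{1}_{\Qp\setminus\Zp}$, the reduction to $g_2$, the coefficients on the discs $a+p\Zp$, and the ``in particular'' clauses all agree with the paper. For the first of these, you transport by $w$ to $p\Zp$, kill the $\logL(z)$ term with the first bullet of Lemma~\ref{New coefficient identities}, and then apply Lemma~\ref{qp-zp part is 0}. For the second, you use Lemma~\ref{Telescoping lemma}(1). The gap is the passage from $g_2$ (discs $a+\alpha p+p^2\Zp$) to $g_1$ (discs $a+p\Zp$). \cite[Lemma 6.3]{CG23} is only an integrality criterion for individual functions $c(z-a)^j\mathbbm{1}_{a+p^h\Zp}$, and it cannot coarsen discs. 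One has to show that the coefficient of $(z-a-\alpha p)^j\mathbbm{1}_{a+\alpha p+p^2\Zp}$ in $g_2-g_1$, namely
\[
\binom{n}{j}\sum_{i\in I}p^x\lambda_i\Bigl[(a+\alpha p-i)^{n-j}\logL(a+\alpha p-i)-\sum_{l=0}^{n-j-1}\binom{n-j}{l}(\alpha p)^l(a-i)^{n-j-l}\logL(a-i)\Bigr],
\]
is $0 \bmod p^{r/2-j}\pi$, and this fails term by term. For $i\equiv a \bmod p$, replacing the logarithms by $\cL$ turns the bracket into $(\alpha p)^{n-j}\cL$. This is a degree-$n$ contribution that $g_1$ does not contain. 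For $a=0$, $i=kp$ (so $\lambda_i$ is a unit) and $v_p([n]_{b+1})=1$, its valuation is $v_p\binom{n}{j}+r/2-j-1$, which is too small, e.g.\ for $j=0$. It disappears only after summing over the residue class and using $\sum_{i\equiv a}\lambda_i\equiv 0 \bmod p^2$. So the second bullet is exactly what kills these $\cL$-terms. Your sketch does the opposite: it keeps them and uses the second bullet on the non-$\cL$ terms.

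For those non-$\cL$ terms ($i\not\equiv a$), the second bullet is the wrong tool. The bracket is $(a-i+\alpha p)^{n-j}\log(1+\alpha p/(a-i))+(\alpha p)^{n-j}\log(a-i)$. Its expansion contains terms $p^x\lambda_i c_l(a-i)^{n-j-l}(\alpha p)^l$ of valuation about $x+l$, which is far below $r/2-j$ for small $l$ and $j$. A mod $p^2$ congruence on each residue class gains only two powers of $p$. The paper instead uses the exact identities $\sum_{i\in I}\lambda_i(a-i)^m=0$ (first bullet) to rewrite the sum over $i\not\equiv a$ as minus the same polynomial expression over $i\equiv a$. There $(a-i)^{n-j-l}(\alpha p)^l$ is divisible by $p^{n-j}$, and $n\ge r/2+b+1$ finishes; the $l\ge n-j$ terms are bounded directly. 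Alternatively, you could keep the harmonic-number terms of the derivatives for each $i$ separately. Then the $i\not\equiv a$ contributions become Taylor remainders of functions analytic on $a+p\Zp$ and can be bounded by valuation, but some such argument is required.

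Two smaller points. First, once on the discs $a+p\Zp$, the $i\not\equiv a$ terms of $g^{(j)}(a)/j!$ vanish because $p\mid\log(a-i)$ and $x>-1$ strictly. This needs $v_p(\cL)<r/2-n$; $x\ge -1$ is not enough. Second, the cut-off in $j$ comes from requiring $x+n-j+v_p(\cL)+v_p(\lambda_i)>0$ for $j<r/2$, using $p\mid\lambda_i$ when $a\ne0$. It does not come from ``$x+n-j>r/2-j$'', which does not depend on $j$.
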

\begin{proof}
    Before beginning the proof, we remark that $v_p(\cL) \lneq r/2 - n$ implies $x \gneq - v_p([n]_{b + 1})$ (cf. Remark~\ref{Bound on x}). Also, $0 \leq b \leq p - 2$.
    
    Write $g(z) = g(z)\mathbbm{1}_{\Zp} + g(z)\mathbbm{1}_{\Qp \setminus \Zp}$. By Lemma~\ref{Telescoping lemma} and Lemma~\ref{qp-zp part is 0}, we see that $g(z) \equiv g_2(z) \mod \pi \latticeL{k}$.

    We show that $g_2(z) - g_1(z) \equiv 0 \mod \pi \latticeL{k}$ as in \cite[Proposition 9.6]{CG23}. The coefficient of $(z - a - \alpha p)^j \mathbbm{1}_{a + \alpha p + p^2 \Zp}$ in $g_2(z) - g_1(z)$ for $0 \leq a, \alpha \leq p - 1$ and $0 \leq j \leq n - 1$ from \cite[(49)]{CG23} is
    \begin{eqnarray}\label{jth term in g2 - g1}
        && {n \choose j}\sum_{i \in I}p^x \lambda_i\left[(a + \alpha p - i)^{n - j}\logL(a + \alpha p - i) - (a - i)^{n - j}\logL(a - i) \right. \\
        && \left. - {n - j \choose 1}(\alpha p)(a - i)^{n - j - 1}\logL(a - i) - \cdots - {n - j \choose n - j - 1}(\alpha p)^{n - j - 1}(a - i)\logL(a - i)\right]. \nonumber
    \end{eqnarray}
    We consider two cases:
            \begin{enumerate}
            \item Sum of the $i \not \equiv a \mod p$ terms.
            
            Let $i \not \equiv a \mod p$. The $i$-th summand in \eqref{jth term in g2 - g1} is
            \[
                p^x \lambda_i[(a - i + \alpha p)^{n - j}\logL(1 + (a - i)^{-1}\alpha p) + (\alpha p)^{n - j}\logL(a - i)].
            \]
            The valuation of the second term in the brackets is at least $x + n - j + 1$. This is greater than $n - j + 1 - v_p([n]_{b + 1}) > r/2 - j$. So we may ignore the second term using \cite[Lemma $6.3$]{CG23}.
            Expand the first term as 
            \[
                (a - i)^{n - j}(1 + (a - i)^{-1}\alpha p)^{n - j}\logL(1 + (a - i)^{-1}\alpha p) = (a - i)^{n - j}\sum_{l \geq 1}c_l(a - i)^{-l}(\alpha p)^l,
            \]
            where $v_p(c_l) \geq - \lfloor \log_p(l) \rfloor$ as was used many times in \cite{CG23} (these $c_l$ have slightly different indices than the $c_l$ in \cite[(50)]{CG23}). As in \cite[(50)]{CG23}, we show that the terms with $l \geq n - j$ vanish mod $p^{r/2 - j}\pi$. Indeed, the valuation of the $l$-th term (including $p^x \lambda_i)$ is $\geq x + l - \lfloor \log_p(l) \rfloor > - v_p([n]_{b + 1}) + l - \lfloor \log_p(l) \rfloor$. The last expression is clearly greater than or equal to $r/2 - j$ for $l = n - j$.

            For $l \geq n - j + 1$, it is enough to check
            \[
                -v_p([n]_{b + 1}) + l - \log_p(l) \geq r/2 - j.
            \]
            
            This is equivalent to
            \[
                p^{- r/2 + j - v_p([n]_{b + 1}) + l} \geq l.
            \]
            \begin{itemize}
                \item Base case: LHS $= p^{n - r/2 + 1 - v_p([n]_{b + 1})} \geq p^{b + 2 - v_p([n]_{b + 1})} \geq r + 1 \geq n - j + 1 =$ RHS. This is true for $p \geq 2$.
                \item Derivative: LHS$' = p^{ - r/2 + j - v_p([n]_{b + 1}) + l}\ln(p) \geq (r + 1)\ln(p) > 1 =$ RHS$'$. This is true for $p \geq 2$.
            \end{itemize}
            
            So the $i$-th summand can be written as
            \begin{eqnarray}\label{i neq a mod p in g2 - g1}
                p^x\lambda_i[(a - i)^{n - j - 1}\alpha p + c_{2}(a - i)^{n - j - 2}(\alpha p)^2 + \cdots + c_{n - j - 1}(a - i)(\alpha p)^{n - j - 1}] \mod p^{r/2 - j}\pi.
            \end{eqnarray}

            Summing \eqref{i neq a mod p in g2 - g1} over the $i \not \equiv a \mod p$ terms and using $\sum_{i \in I}\lambda_i (z - i)^{l} = 0$ for $0 \leq l \leq n$ (a consequence of $\sum_{i \in I}\lambda_i i^l = 0$), the sum becomes
            \[
                \sum_{i \equiv a \!\!\!\!\mod p} -p^x\lambda_i[(a - i)^{n - j - 1}\alpha p + c_{2}(a - i)^{n - j - 2}(\alpha p)^2 + \cdots + c_{n - j - 1}(a - i)(\alpha p)^{n - j - 1}] \mod p^{r/2 - j}\pi.
            \]
            Each term above vanishes since it has valuation $\geq x + n - j -\lfloor \log_p(n) \rfloor > n - v_p([n]_{b + 1}) - j -\lfloor \log_p(n) \rfloor \geq r/2 - j$ as $n \geq r/2 + b + 1$.
            
            \item Sum of the $i \equiv a \mod p$ terms.
            
            Let $i \equiv a \mod p$. We prove that we can replace all log terms in \eqref{jth term in g2 - g1} with $\cL$. This is similar to \cite[Claim (26)]{CG23}. For instance, we prove
            \begin{eqnarray*}
                p^x\lambda_i(a - i)^{n - j}\logL(a - i) & \equiv & p^x\lambda_i(a - i)^{n - j}\cL \mod p^{r/2 - j}\pi.
            \end{eqnarray*}
            This is obvious if $a - i = 0$ as $j < n$. When $v_p(a - i) = 1$, write $a - i = pu$ for some unit $u$. Therefore
            \[
                p^x\lambda_i(a - i)^{n - j}\logL(a - i) = p^x\lambda_i(a - i)^{n - j}\cL + p^x\lambda_i(a - i)^{n - j}\logL(u).
            \]
            The valuation of the second term on the RHS of the equation above is $\geq x + n - j + 1 > n + 1 - v_p([n]_{b + 1}) - j > r/2 - j$. The case $2 \leq v_p(a - i) < \infty$ cannot happen as $-(b + 1)p \leq a - i \leq p - 1$ and $0 \leq b \leq p - 2$. This proves the claim.

            Therefore, using the binomial theorem, we see that each of the $i \equiv a \mod p$ summands in \eqref{jth term in g2 - g1} is congruent to 
            \[
                p^x \lambda_i (\alpha p)^{n - j}\cL
            \]
            modulo $p^{r/2 - j}\pi$. Using the fact that $\sum_{i \equiv a \!\!\mod p}\lambda_i \equiv 0 \mod p^2$ (by Lemma~\ref{New coefficient identities}), we see that the sum of the $i \equiv a \mod p$ terms is $0$ mod $p^{r/2 - j}\pi$.
            \end{enumerate}
            Therefore $g_2(z) - g_1(z) \equiv 0 \mod \pi\latticeL{k}$. Note that so far we have just used $v_p(\lambda_i) \geq 0$ and not the stronger inequality $v_p(\lambda_i) \geq 1$ for $i \not \equiv 0 \mod p$, which will be used shortly below.
    
    Thus $g(z) \equiv g_1(z) \mod \pi\latticeL{k}$, where
    \begin{eqnarray}\label{g1}
        g_1(z) = \sum_{a = 0}^{p - 1}\sum_{j = 0}^{n - 1}\frac{g^{(j)}(a)}{j!}(z - a)^j \mathbbm{1}_{a + p\Zp}.
    \end{eqnarray}
    We have
    \[
        \frac{g^{(j)}(a)}{j!} = \sum_{i \in I}{n \choose j}p^x\lambda_i(a - i)^{n - j}\logL(a - i).
    \]
    To compute this sum, we consider two cases:
    
    \begin{itemize}
        \item $a \neq 0$.
            
            The $i \not \equiv a \!\!\mod p$ terms are $0$ modulo $\pi$ using \cite[Lemma 6.3]{CG23} because $x > -1$ and $p \mid \logL(a - i)$.

            So assume $i \equiv a \mod p$. Using $v_p(\lambda_i) \geq 1$ from Lemma~\ref{New coefficient identities}, note that $x + v_p(\lambda_i) + n - j + v_p(\cL) \geq r/2 - j + 1 - v_p([n]_{b + 1}) \geq r/2 - j$. We use \cite[Lemma $6.3$]{CG23} to see that the $j < r/2$ terms are $0$. In the $j \geq r/2$ terms, we replace all logs by $\cL$ modulo $p^{r/2 - j}\pi$ as was done for $g_2(z) - g_1(z)$ and write the $i$-th summand as
            \[
                {n \choose j}p^x\lambda_i(a - i)^{n - j}\cL.
            \]
            Substituting the value of $\lambda_i$ obtained in \eqref{Value of lambda_i} the display above becomes
            \[
                {n \choose j}p^x(-1)^{n - i}{(b + 1)p \choose n + 1}\frac{(n + 1)}{(b + 1)p - i}{n \choose i}(a - i)^{n - j}\cL.
            \]
            Substituting $i = a + lp$, we see that the sum of the $i \equiv a \mod p$ terms is
            \[
                {n \choose j}p^{x + n - j}(-1)^{n-a}{(b + 1)p \choose n + 1}(n + 1)\cL\sum(-1)^l\frac{1}{(b + 1)p - a - lp}{n \choose a + lp}(-l)^{n - j},
            \]
            where the sum ranges over $0 \leq l \leq b - 1$ if the $0$-th $p$-adic digit $\epsilon$ of $n$ satisfies $\epsilon < a$ or over $0 \leq l \leq b$ otherwise.

            The valuation of the expression outside the sum is $\geq x + n - j + 1 + v_p(\cL) = r/2 - j + 1 - v_p([n]_{b + 1})$. Therefore to evaluate the expression above modulo $p^{r/2 - j}\pi$, we have to evaluate the sum modulo $\pi$.

            If $\epsilon < a$, then this sum is $0$ by Lucas' theorem. However, if $\epsilon \geq a$, then the sum is
            \[
                {\epsilon \choose a}\sum_{l = 0}^{b}(-1)^l\frac{(-l)^{n - j}}{-a}{b \choose l} \mod p.
            \]
            This sum is
            \[
                {\epsilon \choose a} \frac{(-1)^{n - j}}{(-a)} b! (-1)^b {n - j \brace b},
            \]
            where ${n - j \brace b}$ is a Stirling number of the second kind. Therefore if $p \leq n \leq p^2 - p - 1$, the $a \neq 0$ summands in $g_1(z)$ are
            \[
                \sum_{a = 1}^{n - bp}\sum_{j = \lceil r/2 \rceil}^{n - 1}{n \choose j}{\epsilon \choose a}p^{x + n - j}\frac{(-1)^{- a - j + b - 1}}{a}{(b + 1)p \choose n + 1}(n + 1)\cL b!{n - j \brace b}(z - a)^j\mathbbm{1}_{a + p\Zp} \mod \pi\latticeL{k}.
            \]
        \item $a = 0$.

            The $i \not \equiv 0 \!\!\mod p$ terms in $\frac{g^{(j)}(a)}{j!}$ are $0$ modulo $\pi$ using \cite[Lemma 6.3]{CG23} because $x > -1$ and $p \mid \logL(a - i)$.

            The $i \equiv 0 \!\!\mod p$ terms are: 
            \[
                {n \choose j}p^x \lambda_i (-i)^{n - j}\logL(-i) \equiv {n \choose j}p^x\lambda_i(-i)^{n - j}\cL \mod p^{r/2 - j}\pi.
            \]
            We have used the fact that $(b + 1)p < p^2$ here. Substituting the value of $\lambda_i$, this equals
            \begin{eqnarray*}
                {n \choose j}p^x (-1)^{n - i}{(b + 1)p \choose n + 1}\frac{(n + 1)}{(b + 1)p - i}{n \choose i} (-i)^{n - j}\cL & \text{ if } i \neq (b + 1)p \\
                {n \choose j}p^x(-1)(-i)^{n - j}\cL & \text{ if } i = (b + 1)p.
            \end{eqnarray*}
            If $b = 0$, then the sum of the $i \equiv 0 \mod p$ terms is
            \[
                -{n \choose j}p^{x + n - j}(-1)^{n - j}\cL.
            \]
            If $b = 1$, the sum of the $i \equiv 0 \mod p$ terms is
            \[
                {n \choose j}p^{x + n - j}(-1)^{n - j}\cL\left((-1)^{n - p}{2p \choose n + 1}\frac{(n + 1)}{p}{n \choose p} - 2^{n - j}\right).
            \]
            For a general $b \leq p - 2$, the sum of the $i \equiv 0 \mod p$ terms is
            \begin{eqnarray*}\label{sum of i eqiv 0 mod p terms in g1}
                && {n \choose j}p^{x + n - j}(-1)^{n - j}{(b + 1)p \choose n + 1}\frac{(n + 1)}{p} \cL \left(\sum_{\substack{i = 0 \\ p \mid i}}^{bp}(-1)^{n - i}\frac{(i/p)^{n - j}}{b + 1 - i/p}{n \choose i}\right) \\
                && \quad - {n \choose j}p^{x + n - j}(-1)^{n - j}(b + 1)^{n - j}\cL.
            \end{eqnarray*}
            
            Note that if $j < r/2 - 1$, then the display above is $0$ modulo $\pi$ since its valuation is $\geq x + n - j + v_p(\cL) = r/2 - j - v_p([n]_{b + 1}) > 0$. So terms in \eqref{g1} with $a = 0$, $j < r/2 - 1$ are $0$ mod $\pi\latticeL{k}$ by \cite[Lemma 6.3]{CG23}. However, terms with $j \geq r/2 - 1$ possibly survive. 

            Since the terms outside the parentheses in the first line have valuation $\geq x + n - j + v_p(\cL) = r/2 - j - v_p([n]_{b + 1})$, we only need to evaluate the sum in the parentheses modulo $p^2$. Writing $n = bp + \epsilon$, $i = kp$, and using Lucas' theorem 
            for the values of binomial coefficients mod $p^2$ (Lemma~\ref{Lucas mod p^2}), the sum becomes
            \begin{eqnarray*}
                && \equiv \frac{(-1)^n}{b + 1}\left[\sum_{k = 0}^{b}(-1)^k\frac{k^{n - j}(b + 1)}{b + 1 - k}{b \choose k}(1 + pkH_{\epsilon})\right] \mod p^2 \\
                && = \frac{(-1)^n}{b + 1}\left[\sum_{k = 0}^{b + 1}(-1)^kk^{n - j}{b + 1 \choose k}(1 + pkH_{\epsilon}) - (-1)^{b + 1}(b + 1)^{n - j}(1 + p(b + 1)H_{\epsilon})\right] \\
                && = \frac{(-1)^n}{b + 1}\Bigg[(-1)^{b + 1}{n - j \brace b + 1}(b + 1)! + pH_{\epsilon}(-1)^{b + 1}{n - j + 1 \brace b + 1}(b + 1)!  \\
                && \qquad \qquad \qquad \qquad -(-1)^{b + 1}(b + 1)^{n - j} - pH_{\epsilon}(-1)^{b + 1}(b + 1)^{n - j + 1}\Bigg].
            \end{eqnarray*}

            Putting
            \begin{eqnarray*}
                *_j & := & {(b + 1)p - 1 \choose n}(-1)^n\Bigg[(-1)^{b + 1}{n - j \brace b + 1}(b + 1)! + pH_{\epsilon}(-1)^{b + 1}{n - j + 1 \brace b + 1}(b + 1)!  \\
                && \qquad \qquad \qquad \qquad -(-1)^{b + 1}(b + 1)^{n - j} - pH_{\epsilon}(-1)^{b + 1}(b + 1)^{n - j + 1}\Bigg] - (b + 1)^{n - j},
            \end{eqnarray*}
            we see that the $a = 0$ summands in $g_1(z)$ are
            \[
                \sum_{j = \lceil r/2 \rceil - 1}^{n - 1}{n \choose j}p^{x + n - j}(-1)^{n - j}\cL *_j z^j\mathbbm{1}_{p\Zp} \mod \pi \latticeL{k}.
            \]
    \end{itemize}
    Therefore, we finally get

    \begin{eqnarray*}
        g(z) & \equiv & \sum_{a = 1}^{n - bp}\sum_{j = \lceil r/2 \rceil}^{n - 1}{n \choose j}{\epsilon \choose a}p^{x + n - j}\frac{(-1)^{- a - j + b - 1}}{a}{(b + 1)p \choose n + 1}(n + 1)\cL b!{n - j \brace b}(z - a)^j\mathbbm{1}_{a + p\Zp} \\
        && + \sum_{j = \lceil r/2 \rceil - 1}^{n - 1}{n \choose j}p^{x + n - j}(-1)^{n - j}\cL *_j z^j\mathbbm{1}_{p\Zp} \mod \pi \latticeL{k}.
    \end{eqnarray*}

    Next, we compute the value of $*_j$ modulo $p^2$. We write $n = bp + \epsilon$. 
    Using the mod $p^2$ version of Lucas and $H_{p - 1} \equiv 0 \mod p$, the binomial coefficient times the sign becomes
    \begin{eqnarray*}
        {p - 1 \choose \epsilon}(1 + bp(H_{p - 1} - H_{\epsilon}))(-1)^{b + \epsilon} & = & \frac{(1 - p)(2 - p) \cdots (\epsilon - p)}{\epsilon!}(1 - bpH_{\epsilon})(-1)^b \\
            & \equiv & (1 - pH_{\epsilon})(1 - bpH_{\epsilon})(-1)^b \mod p^2 \\
            & \equiv & (1 - (b + 1)pH_{\epsilon})(-1)^b \mod p^2.
    \end{eqnarray*}
    Substituting this in the equation above, we get
    \begin{eqnarray*}
        *_j & \equiv & - {n - j \brace b + 1}(b + 1)! + pH_{\epsilon}\left[-{n - j + 1 \brace b + 1}(b + 1)! + {n - j \brace b + 1}(b + 1)(b + 1)!\right] \\
        & \equiv & -{n - j \brace b + 1}(b + 1)! - pH_{\epsilon}{n - j \brace b}(b + 1)! \mod p^2.
    \end{eqnarray*}
    In the last step, we have used the recurrence relation for Stirling's numbers of the second kind given, e.g., in \cite[Section $1$]{SP00}. The final two claims regarding the values of $*_j$ modulo $p$ and $p^2$ can now be proved using properties of Stirling's numbers.
\end{proof}

In the next section, we eliminate most sub-quotients $F_{2i, 2i + 1}$ of $\br{\latticeL{k}}$ when $cp + c + 2 \leq r \leq (c + 1)p - 1$ for $c = 1, 2$ using the master congruence above (though in the case $c = 1$, we must exclude the top value $r = 2p - 1$ because of a technicality).

\section{Improving the Bergdall-Levin-Liu bound}\label{Improving the BLL bound}

Throughout this section, let $c = \lfloor r/p \rfloor$ so that $cp \leq r \leq (c + 1)p - 1$. We begin by showing that if $v_p(\cL) < r/2 - n$ for an $n$ satisfying $r/2 + b + 1 \leq n \leq r$ and $v_p([n]_{b + 1}) = 0$, then the sub-quotient $F_{2(r - (n - b - 1)), 2(r - (n - b - 1)) + 1}$ vanishes. This is what we called the good method in the introduction.

\begin{theorem}\label{The good method}
    Fix $cp \leq r \leq (c + 1)p - 1$ with $1 \leq c \leq p - 2$. Let $\cL$ be chosen such that $v_p(\cL) < r/2 - n$ for some integer $n$ satisfying $r/2 + b + 1 \leq n \leq r$ and $v_p([n]_{b + 1}) = 0$. Then, $F_{2(r - (n - b - 1)), 2(r - (n - b - 1)) + 1} = 0$.
\end{theorem}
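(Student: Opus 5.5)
We will apply Proposition~\ref{Final congruence} with the given $n$ (so $b = \lfloor n/p \rfloor$ and $\epsilon = n - bp$). Since $v_p([n]_{b+1}) = 0$, the normalization is $x + v_p(\cL) = r/2 - n$. Because $v_p(\cL) < r/2 - n$, we get $x > 0$, so $g(z)$ is integral. We must first check that $g \in \latticeL{k}$, i.e., that $p^x$ times the poly$\cdot$log combination lies in the lattice. This follows as in \cite{CG23}: the $\lambda_i$ kill the polynomial parts of degree $\leq n$, and $n \geq \lceil r/2\rceil$. Granting this, $g \equiv 0 \mod \pi\latticeL{k}$ whenever $x > 0$ strictly beyond the integrality threshold. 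In any case the right side of the master congruence is integral, so we will read off a relation in $\br{\latticeL{k}}$.

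Next we determine which terms on the right side survive mod $\pi$. For the $a=0$ sum, the $j$-th term has valuation $x + n - j + v_p(\cL) + v_p(*_j) = r/2 - j + v_p(*_j)$ (up to binomial factors). By \cite[Lemma 6.3]{CG23}, terms with $r/2 - j + v_p(*_j) > 0$ die. The proposition gives $*_j \equiv 0 \mod p$ for $j \geq n-b$ and $*_{n-b-1} \equiv -(b+1)! \not\equiv 0$, since $b \leq p-2$. The terms with $j < n-b-1$ have $j < r/2$ only when $n - b - 1 \leq r/2$, which we will need to handle. The condition $r/2 + b + 1 \leq n$ gives $n-b-1 \geq r/2$, so all $j > n-b-1$ carry an extra $p$. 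Those terms then have exponent of $p$ at least $r/2 - j + 1$, which is positive as long as $j < r/2 + 1$, and otherwise they lie in the range where we compare with the $F$-filtration degree. The surviving dominant term is ${n\choose n-b-1}(b+1)!\,p^{x+b+1}\cL\,z^{n-b-1}\mathbbm{1}_{p\Zp}$, with exponent $r/2 - (n-b-1)$. This is exactly the normalized element $p^{r/2 - m}z^{m}\mathbbm{1}_{p\Zp}$ with $m = n-b-1$, which by the diagram in the introduction is $-\beta$ applied to the generator of $F_{2(r-m),2(r-m)+1}$. The binomial ${n \choose n-b-1}$ is a unit because $v_p([n]_{b+1}) = 0$.

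For the $a \neq 0$ sum, the coefficients involve ${n - j \brace b}$, which vanishes for $n - j < b$. These terms are of the form $p^{r/2-j}(z-a)^j\mathbbm{1}_{a+p\Zp}$ times a $p$-adic unit or non-unit. Their images lie in the part of the filtration coming from $\beta$-translates of functions $z^{j'}$ with $j' \leq j$ on $a + p\Zp$. We will show they map to filtration pieces deeper than $F_{2(r-m),2(r-m)+1}$, or are killed by the factor ${(b+1)p \choose n+1}(n+1)$, which is divisible by $p$ unless $\epsilon = p-1$. We will also need to show that the $a=0$ terms with $j > m$ land in the sub-quotients $F_{2(r-j),\ldots}$, which are earlier in the filtration ordering, or vanish. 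Once this is done, projecting the congruence $0 \equiv g$ onto $F_{2(r-m),2(r-m)+1}$ shows that a unit multiple of its generator (after applying $\beta$, an automorphism) is $0$. Hence the sub-quotient vanishes.

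The main obstacle is the bookkeeping in this last step. We must verify that every other surviving term, in particular the $j \in [m+1, n-1]$ terms with $*_j \equiv 0 \mod p$ but with exponent $r/2 - j < 0$ possibly compensating, and the $a\neq0$ terms, projects to zero in $F_{2(r-m),2(r-m)+1}$ modulo the lower filtration steps. We will first try to order the filtration so that pieces with larger $j$ (smaller $r - j$) are quotiented out earlier. If that fails, we will use the mod $p^2$ values of $*_j$ from Proposition~\ref{Final congruence}.
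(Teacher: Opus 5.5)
Your strategy is the paper's. You apply Proposition~\ref{Final congruence} to the given $n$ and observe that, since $v_p([n]_{b+1})=0$, the $j=m:=n-b-1$ term of the $a=0$ sum is a unit times $p^{r/2-m}z^m\mathbbm{1}_{p\Zp}$. It is a unit because ${n \choose b+1}=[n]_{b+1}/(b+1)!$ and $*_m\equiv-(b+1)! \bmod p$.

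A side remark: the left side of the congruence is $0$ simply because $g$ vanishes in $B(k,\cL)$. It is a poly$\cdot$log combination with $\sum_i\lambda_i i^j=0$ for $0\le j\le n$ and $n>r/2$. No integrality threshold on $x$ is involved.

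However, the step you postpone as ``the main obstacle'' is essentially the whole proof. You leave it open, and two of your claims about it are wrong. As written, the argument has a gap.

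\emph{The $a\neq 0$ line.} There is no exceptional case $\epsilon=p-1$. The factor ${(b+1)p \choose n+1}(n+1)=(b+1)p{(b+1)p-1 \choose n}$ is \emph{always} divisible by $p$; when $\epsilon=p-1$ the factor $n+1=(b+1)p$ supplies it. So every coefficient of $(z-a)^j\mathbbm{1}_{a+p\Zp}$ there has valuation $\ge r/2-j+1$, and \cite[Lemma 6.4]{CG23} kills the entire first line. Nothing needs to be placed in deeper pieces.

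\emph{The shallower $a=0$ terms.} Your criterion that a term dies when $r/2-j+v_p(*_j)>0$ is the wrong one for $j\ge r/2$. There is also no question of a negative exponent ``compensating''. The correct scale is $p^{r/2-j}z^j\mathbbm{1}_{p\Zp}$, which is integral for $j\ge\lceil r/2\rceil$ by \cite[Lemma 6.4]{CG23}. This is the same fact you use when you identify the $j=m$ term with $-\beta$ of a generator. Since $p\mid *_j$ for $j\ge n-b$, those coefficients have valuation $\ge r/2-j+1$, so these terms vanish outright mod $\pi$. You need neither the mod $p^2$ values of $*_j$ nor any choice of ``ordering''. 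The filtration is canonical, and the shallower terms must actually vanish, not merely sit in other filtration steps.

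\emph{The lower terms.} These are only gestured at in your proposal. For $\lceil r/2\rceil\le j\le n-b-2$, the coefficient has valuation $\ge r/2-j$ because $*_j$ is integral. So these terms are integral and supported on sub-quotients deeper than $F_{2(r-m),2(r-m)+1}$. The $j=\lceil r/2\rceil-1$ term is always present and has $j<r/2$. Its coefficient has valuation $\ge r/2-\lceil r/2\rceil+1>0$, so it vanishes by \cite[Lemma 6.3]{CG23}.

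Only with these three facts does the congruence read ``unit $\cdot$ generator of $F_{2(r-m),2(r-m)+1}\equiv$ deeper terms mod $\pi$''. That is what kills the sub-quotient.
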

\begin{proof}
    Let us begin by simplifying the congruence proved in Proposition~\ref{Final congruence} for those $n$ for which $v_p([n]_{b + 1}) = 0$. The congruence implies that
    \begin{eqnarray*}
        0 & \equiv & \sum_{a = 1}^{n - bp}\sum_{j = \lceil r/2 \rceil}^{n - 1}{n \choose j}{\epsilon \choose a}p^{x + n - j}\frac{(-1)^{- a - j + b - 1}}{a}{(b + 1)p \choose n + 1}(n + 1)\cL b!{n - j \brace b}(z - a)^j\mathbbm{1}_{a + p\Zp} \\
        && + \sum_{j = \lceil r/2 \rceil - 1}^{n - 1}{n \choose j}p^{x + n - j}(-1)^{n - j}\cL *_j z^j\mathbbm{1}_{p\Zp} \mod \pi \latticeL{k}.
    \end{eqnarray*}
        The valuation of the coefficient of $(z - a)^j\mathbbm{1}_{a + p\Zp}$ in the first line is $\geq x + n - j + 1 + v_p(\cL)$. Since $v_p([n]_{b + 1}) = 0$, this bound is equal to $r/2 - j + 1$. Therefore, we may use \cite[Lemma 6.4]{CG23} to eliminate the first line on the RHS of the congruence above. 
        
        Next, the valuation of the coefficient of $z^j\mathbbm{1}_{p\Zp}$ in the second line is $\geq x + n - j + v_p(\cL) + v_p(*_j)$. If $j \geq n - b$, then using the mod $p$ congruence for $*_j$ proved in Proposition~\ref{Final congruence}, we see that $v_p(*_j) \geq 1$. So the $j \geq n - b \>\> (\geq r/2 + 1)$ terms in the second line on the RHS of the congruence above can again be ignored and we have 
        \begin{eqnarray}\label{Inductive step ugly argument}
            0 \equiv \sum_{j = \lceil r/2 \rceil - 1}^{n - b - 1}{n \choose j}p^{x + n - j}(-1)^{n - j}\cL *_j z^j\mathbbm{1}_{p\Zp} \mod \pi \latticeL{k}.
        \end{eqnarray}
        
        Since $v_p([n]_{b + 1}) = 0$, we see that $p \nmid {n \choose n - b - 1}$ (note $p \nmid (b + 1)!$ if $b \leq p - 2$). Also $p \nmid *_{n - b - 1}$ by the same proposition. This shows that the $j = n - b - 1  \>\>(\geq r/2)$ term is a generator of $F_{2(r - n + b + 1), 2(r - n + b + 1) + 1}$. Thus, we may use \eqref{Inductive step ugly argument} to kill $F_{2(r - n + b + 1), 2(r - n + b + 1) + 1}$ once we check that the lower degree terms $\lceil r/2 \rceil \leq j \leq n - b - 2$ are integral and that the $j = \lceil r/2 \rceil - 1$ term vanishes. Indeed, the valuation of the coefficient of $z^j\mathbbm{1}_{p\Zp}$ is $\geq r/2 - j$ since $*_j$ is integral. So we use \cite[Lemma 6.4]{CG23} for the $\lceil r/2 \rceil \leq j \leq n - b - 2$ terms and \cite[Lemma 6.3]{CG23} for the $j = \lceil r/2 \rceil - 1$ term to conclude that they are integral.
\end{proof}

Next, we specialize to $cp + c + 2 \leq r \leq (c + 1)p - 1$ for $1 \leq c \leq p - 3$. Using the argument given in Section~\ref{Killing shallow sub-quotients}, we see that all the shallow sub-quotients $F_{0, 1}, F_{2, 3}, \ldots, F_{2(c - 1), 2(c - 1) + 1}$ vanish.

Next, we claim that for $c = 1, 2$, the sub-quotient $F_{2c, 2c + 1}$ is the only one contributing towards $\br{\latticeL{k}}$. So we must eliminate all the deeper sub-quotients. We may separate the deeper sub-quotients into two sets. The first set consists of those generated by $p^{r/2 - (n - b - 1)}z^{n - b - 1}\mathbbm{1}_{p\Zp}$ for some $n$ satisfying $r/2 + b + 1 \leq n \leq r$ for which $v_p([n]_{b + 1}) = 0$ whereas the second set consists of those for which $v_p([n]_{b + 1}) = 1$. The first ones are eliminated using Theorem~\ref{The good method}. This means that the sub-quotients which are possibly left are those generated by $p^{r/2 - j}z^{j}\mathbbm{1}_{p\Zp}$ where $j$ runs through those values in the list below which are greater than or equal to $\lceil r/2 \rceil$:
    \begin{center}
        \begin{tabular}{ c c c c c}
            $cp - c - 1$, & $cp - c$, & $\ldots$, &  $cp - 2,$ & $cp - 1$,\\ 
            $(c - 1)p - c$, & $(c - 1)p - c + 1$, & $\ldots$, & $(c - 1)p - 1$, & \\
            $\vdots$ & $\vdots$ & $\ddots$ & & \\
            $p - 2$ & $p - 1.$ & & &
        \end{tabular}
    \end{center}
Indeed, this list consists of those values of $n - b - 1$ for which $n$ satisfies $v_p([n]_{b + 1}) = 1$. Note that the values of $n - b - 1$ appearing in the first column can also be obtained from $n$ satisfying $v_p([n]_{b + 1}) = 0$. Indeed, if $n = dp - 1$ for any $1 \leq d \leq c$, then $n - b - 1 = dp - 1 - (d - 1) - 1 = dp - d - 1$. Therefore we may ignore the values of $n - b - 1$ appearing in the first column.

Next, we eliminate the sub-quotients corresponding to all the remaining entries except the last one in each row. This is what we call the bad method. Note that this argument is not needed for $c = 1$ since there are only two columns and only one row in the diagram above. For $c = 2$, this argument only applies to $n - b - 1 = 2p - 2$ since the diagram becomes
    \begin{center}
        \begin{tabular}{ c c c }
            $2p - 3$, & $2p - 2$, & $2p - 1$,\\ 
            $p - 2$ & $p - 1.$ & 
        \end{tabular}
    \end{center}
The following theorem treats this case.
\begin{theorem}\label{The bad method}
    Let $2p + 4 \leq r \leq 3p - 1$. Let $n = 2p + 1$ and $v_p(\cL) < r/2 - n$. Then, $F_{2(r - 2p + 2), 2(r - 2p + 2) + 1} = 0$.
\end{theorem}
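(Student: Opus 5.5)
The plan is to apply Proposition~\ref{Final congruence} with the single value $n = 2p + 1$, so $b = 2$ and $\epsilon = 1$. Its hypotheses hold: $r/2 + b + 1 \leq (3p - 1)/2 + 3 \leq 2p + 1 \leq r$ for $p \geq 5$. Since $[2p+1]_3 = (2p+1)(2p)(2p-1)$, we have $v_p([n]_{b+1}) = 1$ and $x + v_p(\cL) = r/2 - n - 1$. So every coefficient in the master congruence is a priori one power of $p$ short of what the good method gives. The target is the term with $j = n - b - 1 = 2p - 2$. Its coefficient contains ${2p+1 \choose 3}$, which has valuation exactly $1$, and $*_{2p-2} \equiv -3! \mod p$ is a unit. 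Its valuation is therefore exactly $r/2 - 1 - (2p-2) + 1 = r/2 - j$, so (as in Theorem~\ref{The good method}) it is a unit multiple of $p^{r/2 - j}z^{j}\mathbbm{1}_{p\Zp}$ and generates $F_{2(r - 2p + 2), 2(r - 2p + 2) + 1}$. The rest of the argument shows that every other term in the congruence is either $0$ or integral and lies in a different (shallower) part of the filtration.

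First line (the $a \neq 0$ terms). Only $a = 1$ occurs, since $n - bp = 1$. The general bound on the coefficient is $x + n - j + 1 + v_p(\cL) = r/2 - j$, one short of what is needed. The extra factor comes from ${(b+1)p \choose n+1} = {3p \choose 2p+2}$, which is $\equiv {3 \choose 2}{0 \choose 2} = 0 \mod p$ by Lucas. These terms then have valuation $\geq r/2 - j + 1$ and die by \cite[Lemma 6.4]{CG23}.

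Second line. I treat the terms by ranges of $j$.
For $j = 2p$, the binomial ${2p+1 \choose 2p}$ is a unit, but $*_j \equiv 0 \mod p^2$ because $j \geq n - b + 1$. The total valuation is then $\geq r/2 - j + 1$, so this term dies.
For $j = 2p - 1 = n - b$, we have ${2p+1 \choose 2} = (2p+1)p$, of valuation $1$, and $*_j \equiv -pH_1 \cdot 3! \mod p^2$, of valuation $1$. Again the valuation is $\geq r/2 - j + 1$ and the term dies.
For $\lceil r/2 \rceil \leq j \leq 2p - 3$, note that $\lceil r/2 \rceil \geq p + 2$. So $j = p + t$ with $2 \leq t \leq p - 3$, and Lucas gives ${2p+1 \choose j} \equiv {2 \choose 1}{1 \choose t} = 0 \mod p$. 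This extra $p$ makes these coefficients have valuation $\geq r/2 - j$, so they are integral by \cite[Lemma 6.4]{CG23} and land in shallower sub-quotients.

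The boundary term $j = \lceil r/2 \rceil - 1$ is the delicate one. If $j \geq p + 2$, the same Lucas argument gives the needed extra $p$ and \cite[Lemma 6.3]{CG23} applies. The one exceptional case is $r = 2p + 4$, which forces $j = p + 1$; there ${2p+1 \choose p+1}$ is a unit. In that case I plan to take the missing $p$ from $*_{p+1}$. Here $n - j = p$, and modulo $p$ we have $*_{p+1} \equiv -{p \brace 3}\,3!$. Since ${p \brace 3}\,3! = 3^p - 3 \cdot 2^p + 3 \equiv 3 - 6 + 3 = 0 \mod p$ by Fermat, $*_{p+1}$ is divisible by $p$. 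This supplies the extra factor, and \cite[Lemma 6.3]{CG23} shows the term vanishes.

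Combining these, the congruence reduces modulo $\pi\latticeL{k}$ to a unit times the generator of $F_{2(r-2p+2), 2(r-2p+2)+1}$ plus integral terms lying in shallower parts of the filtration. This kills that sub-quotient, exactly as at the end of the proof of Theorem~\ref{The good method}.

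I expect the main obstacle to be the bookkeeping of the exact $p$-adic valuations, especially at the boundary $j = \lceil r/2 \rceil - 1$ with $r = 2p + 4$. That case is where the Stirling-number vanishing is essential, and it may require the finer mod $p^2$ description of $*_j$ from Proposition~\ref{Final congruence} if the mod $p$ estimate turns out to be insufficient.
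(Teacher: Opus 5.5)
Your strategy is the paper's. You apply Proposition~\ref{Final congruence} with $n = 2p+1$ (so $b = 2$, $\epsilon = 1$, $v_p([n]_{3}) = 1$), take the $j = 2p-2$ term of the second line as a unit multiple of the generator $p^{r/2-j}z^{j}\mathbbm{1}_{p\Zp}$, and dispose of everything else. Your second-line bookkeeping is correct, including the boundary case $r = 2p+4$ via ${p \brace 3}3! = 3^p - 3\cdot 2^p + 3 \equiv 0 \bmod p$.

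The gap is in the first line. The ``$+1$'' in the bound $x + n - j + 1 + v_p(\cL)$ is exactly the factor of $p$ in ${(b+1)p \choose n+1}(n+1) = (b+1)p{(b+1)p - 1 \choose n}$; this is how $v_p(\lambda_i) \geq 1$ for $p \nmid i$ enters. So you cannot spend $p \mid {3p \choose 2p+2}$ a second time. In fact ${3p \choose 2p+2}(2p+2) = 3p{3p-1 \choose 2p+1}$ has valuation exactly $1$. The true valuation of the coefficient of $(z-1)^j\mathbbm{1}_{1+p\Zp}$ is $r/2 - j + v_p({2p+1 \choose j}) + v_p({2p+1-j \brace 2})$, so your argument as written only yields integrality. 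That is not enough for $j = 2p$ and $j = 2p-1$, which sit at shallower levels. Nor is it enough for $j = 2p-2$: a surviving term there would be a unit times the translate $p^{r/2-j}(z-1)^{2p-2}\mathbbm{1}_{1+p\Zp}$ of your generator, lying in the very sub-quotient you want to kill.

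The repair uses tools you already invoke. For $p+2 \leq \lceil r/2 \rceil \leq j \leq 2p-1$, Lucas gives $p \mid {2p+1 \choose j}$. For $j = 2p$, where ${2p+1 \choose 2p}$ is a unit, the term vanishes because ${1 \brace 2} = 0$. The paper uses the binomial only for $j = 2p-2, 2p-1$ and the Stirling number for $j = 2p$, and is content with integrality for $j \leq 2p-3$.

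You also have the direction of the filtration reversed. The integral terms with $j \leq 2p-3$ correspond to $F_{2(r-j), 2(r-j)+1}$ with $r - j > r - 2p + 2$, so they are \emph{deeper}, not shallower. That is exactly why they may be discarded: killing $F_{2(r-2p+2), 2(r-2p+2)+1}$ means showing its generator lies in the image of the deeper part of the filtration. Genuinely shallower terms, like those with $j = 2p-1, 2p$, must instead be shown to vanish.
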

\begin{proof}
    Proposition~\ref{Final congruence} implies that
            \begin{eqnarray*}
                0 & \equiv & \sum_{a = 1}^{n - bp}\sum_{j = \lceil r/2 \rceil}^{n - 1}{n \choose j}{\epsilon \choose a}p^{x + n - j}\frac{(-1)^{- a - j + b - 1}}{a}{(b + 1)p \choose n + 1}(n + 1)\cL b!{n - j \brace b}(z - a)^j\mathbbm{1}_{a + p\Zp} \\
                && + \sum_{j = \lceil r/2 \rceil - 1}^{n - 1}{n \choose j}p^{x + n - j}(-1)^{n - j}\cL *_j z^j\mathbbm{1}_{p\Zp} \mod \pi \latticeL{k}.
            \end{eqnarray*}
        We claim that the first line on the RHS of the congruence above can be ignored. The valuation of the coefficient of $(z - a)^j\mathbbm{1}_{a + p\Zp}$ is $v_p({n \choose j}) + x + n - j + 1 + v_p(\cL) + v_p({n - j \brace b}) = r/2 - j + v_p({n \choose j}) + v_p({n - j \brace b})$. The 
        $j > n - b$ terms can be ignored since ${n - j \brace b} = 0$. The $j = n - b, n - b - 1$ terms can be ignored using \cite[Lemma $6.4$]{CG23} with Lucas since ${n \choose j} \equiv 0 \mod p$.
        The rest of the terms are integral by the same lemma. So when we project modulo powers of $z$ supported on deeper sub-quotients, we may ignore the first line on the RHS.

        This leaves only the second line on the RHS. The valuation of the coefficient of $z^j\mathbbm{1}_{p \Zp}$ there is $r/2 - j - 1 + v_p({n \choose j}) + v_p(*_j)$. Therefore, we may ignore the $j \geq n - b + 1$ terms using the mod $p^2$ values of $*_j$ given in Proposition~\ref{Final congruence}. To eliminate the $j = n - b$ term, we see that both the binomial coefficient and the Stirling number each contribute at least one power of $p$.
        So we are left with
        \begin{eqnarray*}
            0 & \equiv & {n \choose n - b - 1} p^{x + b + 1}(-1)^{b}\cL(b + 1)!z^{n - b - 1}\mathbbm{1}_{p\Zp} \\
            && \qquad + \sum_{j = \lceil r/2 \rceil - 1}^{n - b - 2} {n \choose j}p^{x + n - j}(-1)^{n - j} \cL *_j z^j\mathbbm{1}_{p\Zp} \mod \pi \latticeL{k},
        \end{eqnarray*}
        where we have substituted for the value of $*_j$ mod $p$ obtained in Proposition~\ref{Final congruence}.
        Note that the coefficient of the first term on the RHS has the correct valuation and is therefore a generator of $F_{2(r - (n - b - 1)), 2(r - (n - b - 1)) + 1}$ which we aim to eliminate. We may do so by going modulo deeper sub-quotients once we show that every term in the sum on the RHS is integral and that the $j = \lceil r/2 \rceil - 1$ term vanishes.
        Since $*_j \equiv -(b + 1)!{n - j \brace b + 1} \mod p$ by Proposition~\ref{Final congruence}, it is enough to check that $r/2 - j - 1 + v_p({n \choose j}) + v_p({n - j \brace b + 1}) \geq r/2 - j$ for all $p + 1 \leq \lceil r/2 \rceil - 1 \leq j \leq n - b - 2 = 2p - 3$. The binomial coefficient vanishes mod $p$ by Lucas for every $j$ except possibly $p + 1$.
        Moreover, $j = p + 1$ can only occur for $r = 2p + 4$. Then, the Stirling number comes to our rescue since ${n - j \brace b} \equiv {1 \brace 2} = 0 \mod p$ using Lemma~\ref{Lucas for Stirling}. Thus, we have shown that every term in the sum above is integral. Note that as before the $j = \lceil r/2 \rceil - 1$ term vanishes by \cite[Lemma $6.3$]{CG23}. This concludes the proof of this theorem.
\end{proof}

Finally, the only deeper sub-quotient that is left in the case $cp + c + 2 \leq r \leq (c + 1)p - 1, c = 1, 2$ is $F_{2(r - (n - b - 1)), 2(r - (n - b - 1)) + 1}$ with $n - b - 1 = cp - 1$, which is the last entry in the first row in the initial diagram (when $c = 2$, the values in the second row are strictly less than $\lceil r/2 \rceil$). To eliminate this sub-quotient, we need the ugly method.

\begin{theorem}\label{The ugly method}
    Let $cp + c + 2 \leq r \leq (c + 1)p - 1$ with $c = 1, 2$ and $v_p(\cL) < r/2 - (cp + c + 1)$. Then, $F_{2(r - cp + 1), 2(r - cp + 1) + 1} = 0$.
\end{theorem}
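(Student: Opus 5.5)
The plan is to apply Proposition~\ref{Final congruence} twice, for two consecutive values of $n$, and take a linear combination of the two resulting congruences. The target sub-quotient $F_{2(r - cp + 1), 2(r - cp + 1) + 1}$ is generated by $p^{r/2 - (cp - 1)}z^{cp - 1}\mathbbm{1}_{p\Zp}$. I would use:
\begin{itemize}
\item $n_1 = cp + c$, with $b = c$. Here $n_1 - b - 1 = cp - 1$, but $v_p([n_1]_{c + 1}) = 1$ because the product contains the factor $cp$. This is the ``bad'' value.
\item $n_2 = cp + c + 1$, with $b = c$. Here $n_2 - b - 1 = cp$ and $v_p([n_2]_{c + 1}) = 0$. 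This is the ``good'' value.
\end{itemize}
The hypotheses $cp + c + 2 \leq r$ and $v_p(\cL) < r/2 - (cp + c + 1)$ should give $r/2 + b + 1 \leq n_i \leq r$ and $v_p(\cL) < r/2 - n_i$ for both $i$, with the range $r \leq (c + 1)p - 1$ keeping everything inside the setting of the Proposition. I would take each congruence with its own $x_i$ satisfying $x_i + v_p(\cL) = r/2 - n_i - v_p([n_i]_{c + 1})$, so both are normalized to be integral.

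\textbf{Congruence for $n_1$.} I would treat it as in the proof of Theorem~\ref{The bad method}.
\begin{itemize}
\item The first line (the $(z - a)^j\mathbbm{1}_{a + p\Zp}$ terms) is discarded using \cite[Lemma 6.4]{CG23}, Lucas for ${n_1 \choose j}$, and the vanishing of ${n_1 - j \brace c}$ for $j > n_1 - c$.
\item In the second line, the coefficient of $z^j$ has valuation $r/2 - j - 1 + v_p({n_1 \choose j}) + v_p(*_j)$.
\item For $j \geq cp + 1$, the mod $p^2$ values of $*_j$ kill the terms.
\item For $j = cp$, we have $*_{cp} \equiv -pH_c(c + 1)! \pmod{p^2}$ and ${cp + c \choose cp} \equiv 1$, so this term has exactly valuation $r/2 - cp$ and survives as a unit multiple of the generator of the shallower neighbour $F_{2(r - cp), 2(r - cp) + 1}$.
\item For $j = cp - 1$, Lucas gives $p \mid {cp + c \choose cp - 1}$ while $*_{cp - 1} \equiv -(c + 1)!$. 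I then need ${cp + c \choose cp - 1}/p$ mod $p$, which I would compute via Lemma~\ref{Lucas mod p^2} or directly. The term then has exact valuation $r/2 - (cp - 1)$ with an explicit unit coefficient.
\end{itemize}
So, modulo deeper sub-quotients, the $n_1$ congruence reads $u_1 X_{cp} + v_1 X_{cp - 1} \equiv 0$, where $X_j$ denotes the normalized generator $p^{r/2 - j}z^j\mathbbm{1}_{p\Zp}$.

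\textbf{Congruence for $n_2$.} As in the proof of Theorem~\ref{The good method}, the first line is discarded, and the terms with $j \geq n_2 - b = cp + 1$ vanish mod $p$. This leaves a relation $u_2 X_{cp} + v_2 X_{cp - 1} \equiv 0$ modulo deeper terms. Here $u_2 = {n_2 \choose cp}(-1)^{c + 1}(-(c + 1)!)$ is a unit, and $v_2$ comes from $*_{cp - 1} \equiv -{c + 2 \brace c + 1}(c + 1)!$ times ${cp + c + 1 \choose cp - 1}$. That binomial is again divisible by $p$, so $v_2$ may well be $0$ mod $p$; either way the relation is usable.

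\textbf{Combining, and the main obstacle.} I would eliminate $X_{cp}$ between the two relations, leaving $(u_2v_1 - u_1v_2)X_{cp - 1} \equiv 0$ modulo deeper sub-quotients. The remaining lower-degree terms $\lceil r/2 \rceil \leq j \leq cp - 2$ are shown integral exactly as in the proofs of Theorems~\ref{The good method} and \ref{The bad method}. For $c = 2$ this uses Lucas on ${n \choose j}$ and Lemma~\ref{Lucas for Stirling} on the Stirling numbers in the edge cases. The $j = \lceil r/2 \rceil - 1$ term is handled by \cite[Lemma 6.3]{CG23}. The hardest step is verifying that the determinant $u_2v_1 - u_1v_2$ is a $p$-adic unit for $c = 1, 2$. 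This requires tracking signs, $H_c$, the factorials $(c + 1)!$, Stirling values such as ${2 \brace 1} = 1$ and ${c + 2 \brace c + 1} = {c + 2 \choose 2}$, and the $p$-adic unit parts of ${cp + c \choose cp - 1}$ and ${cp + c + 1 \choose cp - 1}$. I would first compute these explicitly for $c = 1$ and then for $c = 2$, and check that the combination does not vanish mod $p$ for $p \geq 5$; this is presumably where the restriction to $c \leq 2$ enters. Once the determinant is a unit, $F_{2(r - cp + 1), 2(r - cp + 1) + 1} = 0$.
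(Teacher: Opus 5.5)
Your overall plan, applying Proposition~\ref{Final congruence} with $n_1 = cp+c$ and $n_2 = cp+c+1$, is the paper's. However, the claim that the first line of the $n_1$ congruence can be discarded ``as in Theorem~\ref{The bad method}'' is false at $j = n_1 - b = cp$. Since $v_p([n_1]_{c+1}) = 1$, the coefficient of $(z-a)^{cp}\mathbbm{1}_{a+p\Zp}$ there has valuation $r/2 - cp + v_p\bigl(\binom{cp+c}{cp}\bigr) + v_p\bigl({c \brace c}\bigr) = r/2 - cp$ exactly. Indeed, Lucas gives $\binom{cp+c}{cp} \equiv 1 \bmod p$ (the same fact you use for the second-line $j=cp$ term), ${c \brace c} = 1$, and $\binom{\epsilon}{a} = \binom{c}{a}$ is a unit for every $1 \le a \le c$. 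In Theorem~\ref{The bad method} the analogous terms died only because Lucas kills $\binom{2p+1}{2p-1}$. So, modulo deeper terms, the $n_1$ relation really reads
\[
u_1 X_{cp} + \sum_{a=1}^{c} w_a\, p^{r/2-cp}(z-a)^{cp}\mathbbm{1}_{a+p\Zp} + v_1 X_{cp-1} \equiv 0
\]
with unit $w_a$. These translates are not scalar multiples of $X_{cp}$. Subtracting a multiple of the $n_2$ relation therefore does not remove them, and your $2\times 2$ elimination does not conclude.

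The missing step is to read the $n_2$ relation as a statement about $X_{cp}$ alone and then translate it. You already note that $p \mid \binom{cp+c+1}{cp-1}$ (Lucas, as $c+1 < p-1$). So the $z^{cp-1}$ coefficient there has valuation $\ge r/2-(cp-1)+1$, and $v_2 \equiv 0$ outright, not ``may well be''. Hence the $n_2$ relation says that $X_{cp}$ is congruent modulo $\pi\latticeL{k}$ to integral terms of degree $\le cp-2$. Now apply the unipotent elements of $G$ acting by $z \mapsto z-a$; they preserve $\pi\latticeL{k}$ and the $G$-stable filtration. The same congruence then holds for each $p^{r/2-cp}(z-a)^{cp}\mathbbm{1}_{a+p\Zp}$. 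Substituting into the $n_1$ relation leaves $v_1 X_{cp-1}$ congruent to deeper terms, and $v_1$ is a unit as you showed. This is exactly the paper's argument: its $f$ collects the $i=0$ term and the translated $cp$-level terms. In particular your ``determinant'' is just $u_2 v_1$, so there is no delicate unit computation. The restriction $c \le 2$ is used in the integrality checks for $\lceil r/2\rceil - 1 \le j \le cp-2$, not there. (Minor point: for $c=1$, $r=2p-1$ the hypothesis $r/2+b+1 \le n_1$ fails. But then the target has $i = p > r/2$ and is already zero by \cite[Lemma 6.1]{CG23}; the paper glosses over this case too.)
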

\begin{proof}
    Proposition~\ref{Final congruence} applied to $n = cp + c$ gives us
        \begin{eqnarray*}
        0 & \equiv & \sum_{a = 1}^{n - bp}\sum_{j = \lceil r/2 \rceil}^{n - 1}{n \choose j}{\epsilon \choose a}p^{x + n - j}\frac{(-1)^{- a - j + b - 1}}{a}{(b + 1)p \choose n + 1}(n + 1)\cL b!{n - j \brace b}(z - a)^j\mathbbm{1}_{a + p\Zp} \\
        && + \sum_{j = \lceil r/2 \rceil - 1}^{n - 1}{n \choose j}p^{x + n - j}(-1)^{n - j}\cL *_j z^j\mathbbm{1}_{p\Zp} \mod \pi \latticeL{k}.
    \end{eqnarray*}
    The valuation of the coefficient of $(z - a)^j\mathbbm{1}_{a + p\Zp}$ in the first line is $v_p({n \choose j}) + x + n - j + 1 + v_p(\cL) + v_p({n - j \brace b}) = r/2 - j + v_p({n \choose j}) + v_p({n - j \brace b})$. So the corresponding function is integral.
    The $j \geq n - b + 1$ terms die as ${n - j \brace b} = 0$. The $j = n - b - 1$ term dies since $v_p({n \choose b + 1}) = v_p([n]_{b + 1}) = 1$. This leaves only the $j = n - b$ term and the $\lceil r/2 \rceil \leq j \leq n - b - 2$ terms in the first line.
    
    In the second line, the valuation of the coefficient of $z^j\mathbbm{1}_{p\Zp}$ is $r/2 - j - 1 + v_p({n \choose j}) + v_p(*_j)$. The $j \geq n - b + 1$ terms vanish since $*_j \equiv 0 \mod p^2$ by Proposition~\ref{Final congruence}. The $j = n - b$ term is integral since $*_j \equiv 0 \mod p$. And, the $j = n - b - 1$ term is a generator of $F_{2(r - (n - b - 1)), 2(r - (n - b - 1)) + 1}$ since $v_p({n \choose n - b - 1}) = 1$ and $*_j \equiv -(b + 1)! \mod p$. For the $\lceil r/2 \rceil - 1 \leq j \leq n - b - 2$ terms, one can prove that $p \mid {cp + c \choose j} *_j$ making them integral. For $c = 1$, this is true since $p \mid {p + 1 \choose j}$. For $c = 2$, the binomial coefficient contributes a power of $p$ for every $(j, r) \neq (p + 1, 2p + 4), (p + 2, 2p + 4), (p + 2, 2p + 5), (p + 2, 2p + 6)$.
    For the first case,
    \[
        *_j \equiv -{n - j \brace b + 1}(b + 1)! = -{p + 1 \brace 3}3! \equiv -{2 \brace 3}3! = 0 \mod p
    \]
    by Lemma~\ref{Lucas for Stirling}.
    For the remaining three cases,
    \[
        *_j \equiv -{n - j \brace b + 1}(b + 1)! = -{p \brace 3}3! \equiv -{1 \brace 3}3! = 0 \mod p.
    \]
    So we may mod out powers of $z$ supported on sub-quotients deeper than $F_{2(r - (n - b - 1)), 2(r - (n - b - 1)) + 1}$ and get
    \begin{eqnarray*}
        0 & \equiv & {n \choose n - b - 1}p^{x + b + 1}(-1)^{b}\cL(b + 1)!z^{n - b - 1}\mathbbm{1}_{p\Zp} + f,
    \end{eqnarray*}
    where $f$ is an integral linear combination of terms of the form $p^{r/2 - n + b}(z - i)^{n - b}\mathbbm{1}_{i + p\Zp} = p^{r/2 - cp}(z - i)^{cp}\mathbbm{1}_{i + p\Zp}$ for $0 \leq i \leq c$. We may conclude that $F_{2(r - (n - b - 1)), 2(r - (n - b - 1)) + 1} = 0$ once we eliminate $f$ from this congruence, which we do now. Reapplying Proposition~\ref{Final congruence} to $n = cp + c + 1$ (note that $v_p([n]_{b + 1}) = 0$), we conclude (see \eqref{Inductive step ugly argument})
    \[
        0 \equiv {cp + c + 1 \choose cp}p^{x + c + 1}(-1)^{c}\cL (c + 1)!z^{cp}\mathbbm{1}_{p\Zp} + {cp + c + 1 \choose cp - 1}p^{x + c + 2}(-1)^{c + 2}\cL *_{cp - 1}z^{cp - 1}\mathbbm{1}_{p\Zp}
    \]
    modulo terms supported on deeper sub-quotients. The final idea is that since $p \mid {cp + c + 1 \choose cp - 1}$ (by Lucas noting that $c \leq 2 < p - 2$), the second term in the congruence above can be ignored using \cite[Lemma 6.3, Lemma 6.4]{CG23}, and therefore $p^{r/2 - cp}z^{cp}\mathbbm{1}_{p\Zp}$ is congruent to terms supported on sub-quotients deeper than $F_{2(r - cp + 1), 2(r - cp + 1) + 1}$ modulo $\pi\latticeL{k}$. This means that we may eliminate $f$ from the previous congruence. Therefore, we have finally proved that $F_{2(r - cp + 1), 2(r - cp + 1) + 1} = 0$.
\end{proof}

\begin{theorem}\label{Main theorem}
    Let $p \geq 5$ and $p + 3 \leq r \leq 2p - 2$ or $2p + 4 \leq r \leq 3p - 1$. Then, for any $\cL$ with $v_p(\cL) < -r/2$, we have $\overline{V}_{k, \cL} \simeq \ind \omega_2^{r + 1}$.
\end{theorem}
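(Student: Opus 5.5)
The plan is to show that, in both ranges of $r$, every sub-quotient $F_{2i,2i+1}$ of $\br{\latticeL{k}}$ vanishes except $F_{2c,2c+1}$, where $c = \lfloor r/p \rfloor \in \{1,2\}$. The identification of $\br{V}_{k,\cL}$ is then read off from this surviving sub-quotient. Throughout, note that $r \in [p+3, 2p-2]$ corresponds to $c=1$ with $cp+c+2 \le r \le (c+1)p-1$ (the top value $2p-1$ is excluded), and $r \in [2p+4,3p-1]$ corresponds to $c=2$. The hypothesis $v_p(\cL) < -r/2$ is chosen so that $v_p(\cL) < r/2 - n$ for every $n \le r$. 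Hence the hypotheses of Theorems~\ref{The good method}, \ref{The bad method}, and \ref{The ugly method} are all satisfied: the bound $r/2-(cp+c+1)$ in the last one is $> -r/2$ because $cp+c+1 \le r$.

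\textbf{Step 1: the extreme sub-quotients.} By \cite[Lemma 6.1]{CG23}, $F_{2i,2i+1}=0$ for $i > r/2$. By Section~\ref{Killing shallow sub-quotients}, $F_{2(i-1),2(i-1)+1}=0$ whenever $r \ge i(p+1)-1$. For $i \le c$ this condition holds, since $r \ge cp+c+2$. This kills $F_{0,1},\dots,F_{2(c-1),2(c-1)+1}$.

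\textbf{Step 2: the deeper sub-quotients.} The remaining $F_{2i,2i+1}$ with $c < i \le \lfloor r/2\rfloor$ correspond to generators $p^{r/2-j}z^j\mathbbm{1}_{p\Zp}$ with $j = r-i$ ranging over $\lceil r/2\rceil \le j \le r-c-1$. I would check that every such $j$ has the form $n-b-1$ for some $n$ with $r/2+b+1 \le n \le r$; since $j \le r-c-1$ and $b \le c$, one can take $n$ close to $j+b+1$. When $v_p([n]_{b+1})=0$ can be arranged, Theorem~\ref{The good method} kills the sub-quotient. The leftover values of $j$ are exactly those in the table preceding Theorem~\ref{The bad method} that are $\ge \lceil r/2\rceil$, and the first-column entries are reachable from $n = dp-1$. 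For $c=2$ the entry $2p-2$ is handled by Theorem~\ref{The bad method}, and the entry $cp-1$ for $c=1,2$ by Theorem~\ref{The ugly method}. The second-row entries for $c=2$ lie below $\lceil r/2\rceil$. The main care here is a bookkeeping verification that these cases exhaust the range of $j$, including the edge cases where $n$ must equal $r$.

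\textbf{Step 3: the surviving sub-quotient.} After Steps 1 and 2, $\br{\latticeL{k}}$ is a quotient of $\IZind a^c d^{r-c}$. I would then invoke the Iwahori mod $p$ local Langlands correspondence \cite[Theorem 2.2]{Chi25}. The quotient is nonzero, and the relation coming from $\tU$, $\tW$ (with $\beta$ exchanging the generators $z^c$ and $p^{r/2-(r-c)}z^{r-c}$) forces the Hecke eigenvalue to be zero. The quotient is therefore the supersingular representation attached to $\ind \omega_2^{r+1}$; the twist needs checking via $a^c d^{r-c}$ and the determinant. Compatibility of the $p$-adic and mod $p$ correspondences then gives $\br{V}_{k,\cL}\simeq \ind\omega_2^{r+1}$.

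I expect the main obstacle to be Step 3, namely showing that the surviving sub-quotient is irreducible supersingular rather than principal series or reducible. This requires exhibiting the correct relation in $\br{\latticeL{k}}$ from the action of $\beta$ on the two generators and applying Chi25's classification.
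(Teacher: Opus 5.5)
Your Steps 1 and 2 follow the paper's reduction. In both ranges every graded piece except $F_{2c,2c+1}$ dies, so $\br{\latticeL{k}}$ is a quotient of $\IZind a^c d^{r-c} \simeq \IZind d^{r-2c}\otimes\det^{c}$. That part is fine.

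The gap is Step 3. Your claim that a relation from $\tU$, $\tW$ and $\beta$ ``forces the Hecke eigenvalue to be zero'' is unsupported. That $-\beta$ carries $z^c\mathbbm{1}_{p\Zp}$ to $p^{r/2-(r-c)}z^{r-c}\mathbbm{1}_{p\Zp}$ is just the $G$-action. It produces the second Iwahori eigenvector, with the conjugate character, inside the same cyclic representation, and is not a Hecke relation. Nothing in the paper computes an Iwahori--Hecke eigenvalue on the generator of $F_{2c,2c+1}$, and you do not say how you would.

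No eigenvalue computation is needed, but a different check is, and you omit it. By \cite[Theorem 2.2]{Chi25}, a quotient of $\IZind d^{s}\otimes\det^{c}$ leads to the irreducible $\ind\omega_2^{s'+1}\otimes\omega^{c}$, where $s'\in[0,p-2]$ is the residue of $s$. A reducible $\br{V}_{k,\cL}$ is the only alternative, and it can occur only when $s\equiv 1$ or $p-2 \pmod{p-1}$. Here $s=r-2c$ has residue $r-c(p+1)\in[2,p-3]$ in both cases:
\begin{itemize}
\item $c=1$ with $r\in[p+3,2p-2]$;
\item $c=2$ with $r\in[2p+4,3p-1]$.
\end{itemize}
That is precisely what rules out the reducible possibility.

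It is also why $r=2p-1$ is dropped: there $s\equiv p-2$, even though the elimination of all the other sub-quotients still goes through at that value. Your Step 3 never uses $r \bmod (p-1)$, so it cannot account for this exclusion, which shows it is not capturing the actual constraint. Once the reducible case is excluded, the twist you flagged is immediate: $\ind\omega_2^{r-c(p+1)+1}\otimes\omega^{c}\simeq\ind\omega_2^{r+1}$.
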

\begin{proof}
    First, assume $p + 3 \leq r \leq 2p - 1$. Using Theorem~\ref{The good method}, the discussion after it, and Theorem~\ref{The ugly method}, the only sub-quotient left standing is $F_{2, 3}$. Moreover, $\IZind d^{r - 2} \otimes \det \simeq \IZind ad^{r - 1}$ surjects onto $F_{2, 3}$. Since $r - 2 \not \equiv 1, p - 2 \mod p - 1$ for any $p + 3 \leq r \leq 2p - 2$, the reducible possibility in the mod $p$ local Langlands correspondence cannot occur. This is the reason we drop $r = 2p - 1$.

    Next, assume $2p + 4 \leq r \leq 3p - 1$. Using Theorems~\ref{The good method} (and the discussion after), \ref{The bad method}, and \ref{The ugly method} as before, we see that $F_{4, 5}$ is the only sub-quotient left standing. Moreover, $\IZind d^{r - 4} \otimes \det^{2} \simeq \IZind a^2d^{r - 2}$ surjects onto $F_{4, 5}$. Since $r - 4 \not\equiv 1, p - 2 \mod p - 1$ for any $2p + 4 \leq r \leq 3p - 1$, the reducible possibility in the mod $p$ local Langlands correspondence cannot occur.

    In both cases, one may check using \cite[Theorem 2.2]{Chi25} that $\br{V}_{k, \cL} \simeq \ind \omega_2^{r + 1}$.
\end{proof}

\begin{remark}
    As mentioned in the introduction, this recovers the result of \cite{BLL23} for $p + 3 \leq r \leq 2p - 2$ and improves the bound in \cite{BLL23} for $2p + 4 \leq r \leq 3p - 1$.
\end{remark}
\vspace{0.2cm}
\begin{center}
    \textbf{Acknowledgements}
\end{center}
    The first author was supported by the National Research Foundation of Korea (NRF) grant funded by the Korea government (MSIT) (No. RS-2025-02262988 and No. RS-2025-00517685). The second author acknowledges the support of project 1303/9/2025-R\&D-II-DAE/TIFR-17312.

\bibliographystyle{alpha}
\bibliography{References.bib}

\vspace{1cm}
\noindent
\textbf{Anand Chitrao} \\
Department of Mathematical Sciences, UNIST, Ulsan - 44919, Republic of Korea \\
Email: \texttt{anand@unist.ac.kr}

\medskip \noindent\textbf{Eknath Ghate} \\
School of Mathematics, Tata Institute of Fundamental Research, Homi Bhabha Road, Mumbai-400005, India \\
Email: \texttt{eghate@math.tifr.res.in}

\end{document}